\author{Torgeir Aamb\o}
\title{Positselski duality in $\infty$-categories}
\date{}
\begin{document}
\color{myblack}
\maketitle


\begin{abstract}
    We introduce the notion of a contramodule over a cocommutative coalgebra in a presentably symmetric monoidal $\infty$-category $\C$, and prove a symmetric monoidal $\infty$-categorical version of Positselski's comodule-contramodule correspondence when the coalgebra is coidempotent. This gives a new perspective on, and a new proof of local duality---in the sense of Hovey--Palmieri--Strickland and Dwyer--Greenlees---whenever $\C$ is stable and compactly generated. We further consider an analog of Positselski's definition of contramodules over topological rings in the $\infty$-categorical setting, and show that the two perspectives on contramodules are equivalent. As examples we describe the categories of $\Kn$-local spectra, $T(n)$-local spectra and the derived complete category of a ring $R$, as categories of contramodules.
\end{abstract}

\renewcommand{\baselinestretch}{0.75}\normalsize
{
  \hypersetup{linkcolor=myblack}
  \tableofcontents
}
\renewcommand{\baselinestretch}{1.0}\normalsize

\section{Introduction}

Let $k$ be a field and a $C$ a cocommutative coalgebra in the abelian category $\Vect_k$. A \emph{comodule} over $C$ is a vector space $V$ together with a coassociative counital map $V\to V\otimes_k C$. These objects were introduced in the seminal paper \cite{eilenberg-moore_65} and are categorically dual to modules over algebras. In the same paper Eilenberg and Moore introduced a further dual to comodules, which they called \emph{contramodules}. These are vector spaces $V$ with a map $\Hom_k(C, V)\to V$ satisfying similar axioms called contra-associativity and contra-unitality. 

While modules and comodules got their fair share of fame throughout the decades following their introduction, contramodules were seemingly lost to history---virtually forgotten---until dug out from their grave of obscurity by Positselski in the early 2000's. Positselski has since developed a considerable body of literature on contramodules, see for example \cite{positselski_2010, positselski_2011, positselski_2016, positselski_2017_contraadjusted, positselski_2020} or the survey paper \cite{positselski_2022_contramodules}. 

In \cite{positselski_2010} Positselski introduced the comodule-contramodule correspondence, which is an adjunction between the category of comodules and the category of contramodules over a cocommutative coalgebra $C$. This correspondence sat several existing duality theories on a common footing, for example Serre--Grothendieck duality and Feigin--Fuchs central charge duality. Positselski also introduced the coderived and contraderived categories of $C$-comodules and $C$-contramodules respectively, and used this to prove a derived comodule-contramodule correspondence of the form 
\[\Der^{co}(\ComodC)\simeq \Der^{contra}(\ContraC),\]
generalizing for example Matlis--Greenlees--May duality and Dwyer--Greenlees duality---see \cite{positselski_2016}. 

The goal of the this paper is to extend the comodule-contramodule correspondence---which we will refer to as Positselski duality---to $\infty$-categories. The canonical references for $\infty$-categories are \cite{lurie_09} and \cite{Lurie_HA}, which we will freely use throughout the paper. 

We also study such Positselski duality in stable $\infty$-categories, which are natural enhancements of triangulated categories. These serve as a natural analog of the derived comodule-contramodule correspondence.

\subsection*{Motivation}

Let us try to both make a motivation for the traditional Positselski duality theory and for the connection to coalgebras in stable $\infty$-categories. 

We let $X$ be a separated noetherian scheme, $U\subset X$ an open subscheme and denote by $Z = X\backslash U$ its closed complement. The derived category of all $\O_X$-modules, $\Der(\O_X)$, has a full subcategory $\Der(X)$ consisting of complexes with quasi-coherent homology. We define $\Der(U)$ similarly. These are all stable $\infty$-categories. The homotopy category $h\Der(X)$ is precisely the more traditional triangulated derived category of $X$. 

Letting $i\:U\to X$ denote the inclusion, we get an induced functor on the derived categories $i^*\: \Der(X)\to \Der(U)$ by pulling back along $i$. This has a fully faithful right adjoint  $i_*\: \Der(U)\to \Der(X),$ which itself has a further right adjoint $i^!\: \Der(X)\to \Der(U).$ The kernels of $i^*$ and $i^!$ determine two equivalent subcategories of $\Der(X)$, the former of which is the full subcategory $\Der_Z(X)\subseteq \Der(X)$ consisting of complexes with homology supported on $Z$. 

Denoting by $j\: Z\to X$ the other inclusion we obtain symmetric monoidal stable recollement
\begin{center}
    \begin{tikzcd}[sep = large]
        \Der_Z(X) 
        \arrow[rd, yshift=2pt, xshift=1pt, "j_!"] 
        \arrow[dd, "\simeq"] 
        && \\
        
        &\Der(X) 
        \arrow[lu, yshift=-2pt, xshift=-1pt, "j^!"] 
        \arrow[ld, yshift=2pt, xshift=-1pt, "j^*", swap] 
        \arrow[r, yshift=5pt, bend left, "i^*"] 
        \arrow[r, yshift=-5pt, bend right, "i^!"] 
        &\Der(U) \arrow[l, "i_*", swap] \\
        
        \Der(X)^\wedge_Z \arrow[ru, yshift=-2pt, xshift=1pt, "j_*", swap]  
        &&                  
    \end{tikzcd}
\end{center}
where $\Der(X)^\wedge_Z$ denotes the quasi-coherent sheaves on a formal open neighborhood of $Z$. As mentioned, the categories on the left are equivalent, and are the kernels of $i^*$ and $i^!$. 

This equivalence does not on the surface have anything to do with comodules or contramodules, so let us fix this. For simplicity we assume that $X = \Spec(\Z)$, such that $\Der(X)\simeq \Der(\Z)$. Any prime $p$ determines a closed subscheme $P$ of $X$. With this setup we can identify 
\[\Der_P(X) \simeq \Der(\Comod_{\Z/p^\infty}) \quad\text{and}\quad\Der(X)^\wedge_P \simeq \Der(\Contra_{\Z/p^\infty}),\] 
where $\Z/p^\infty$ is the $p$-Prüfer coalgebra of $\Z$. It is the Pontryagin dual of the $p$-adic completion of $\Z$, often denoted $\Z_p$. 

\begin{introrm}
    There is a more familiar description of $\Comod_{\Z/p^\infty}$ as the $p$-power torsion objects in $\Mod_\Z$ and $\Contra_{\Z/p^\infty}$ as the $L$-complete objects in $\Mod_\Z$. The above then reduces to the derived version of Grothendieck local duality by Dwyer--Greenlees, see \cite{dwyer-greenlees_2002}, showing that this is a certain version of Positselski duality. In \cite[2.2(1), 2.2(3)]{positselski_2017_abelian} Positselski proves that the derived complete modules also correspond to a suitably defined version of contramodules over an adic ring. For the above example this is precisely the $p$-adic integers $\Z_p$. The comodules over $\Z/p^\infty$ then correspond to discrete $\Z_p$-modules, see \cite[Sec. 1.9, Sec. 1.10]{positselski_2022_contramodules}. 
\end{introrm}

The above motivates the classical co/contra correspondence, so let us now see how we wish to abstract this.  

As $i^*$ is a symmetric monoidal localization the category $\Der_Z(X)$ is a localizing ideal. By \cite[6.8]{rouquier_2008} there is a compact object $F\in \Der(X)$ with homology supported on $Z$ such that $F$ generates $\Der_Z(X)$ under colimits. Now, as $\Der_Z(X)$ is a compactly generated localizing ideal of a compactly generated symmetric monoidal stable $\infty$-category, the right adjoint $j^!\: \Der(X)\to \Der_Z(X)$ is smashing, hence given as $j_! j^!(\1)\otimes_{\Der(X)} (-)$, where $\1$ denotes the unit in $\Der(X)$. In $\Der(X)$ the object $j_! j^! (\1)$ is the fiber of the unit map $\1\to i_* i^* (\1)$. In fact, $i_* i^* (\1)$ is an idempotent commutative algebra in $\Der(X)$, hence the fiber of the unit map, i.e. $j_! j^!(\1)$, is a coidempotent cocommutative coalgebra. 

Using a dual version of Barr--Beck monadicity, see \cref{ssec:dual-barr-beck}, one can prove that 
\[\Der_Z(X)\simeq \Comod_{j_! j^! (\1)}(\Der(X)).\] 
Similarly, there is an equivalence $\Der(X)^\wedge_Z \simeq \Contra_{j_! j^! (\1)}(\Der(X)),$ which, put together gives us an instance of Positselski duality for stable $\infty$-categories:
\[\Comod_{j_! j^! (\1)}(\Der(X)) \simeq \Contra_{j_! j^! (\1)}(\Der(X)).\]
This is a special case of our second main theorem, \cref{introthm:B}, which is an application of the Positselski duality for commutative coalgebras  set up in \cref{introthm:A}.

\subsection*{Overview of results}

As mentioned, the main goal of this paper is to introduce the notion of comodules and contramodules in $\infty$-categories. Our main result is the following. 

\begin{introthm}[{\cref{thm:Positselski-duality-coidempotent}}]
    \label{introthm:A}
    Let $\C$ be a presentably symmetric monoidal $\infty$-category. For any coidempotent cocommutative coalgebra $C$, there are mutually inverse symmetric monoidal equivalences
    \begin{center}
        \begin{tikzcd}[sep = large]
            \Comod_C(\C) \arrow[r, yshift=2pt, "\simeq"] & \Contra_C(\C) \arrow[l, yshift=-2pt]
        \end{tikzcd}
    \end{center}
    given by the free contramodule and cofree comodule functor respectively. 
\end{introthm}

Our main application of this is to give an alternative perspective on local duality, in the sense of \cite{hovey-palmiery-strickland_97} and \cite{barthel-heard-valenzuela_2018}. 

\begin{introthm}[{\cref{thm:local-duality-co-contra}}]
    \label{introthm:B}
    Let $(\C, \K)$ be a pair consisting of a rigidly compactly generated symmetric monoidal stable $\infty$-category $(\C, \otimes, \1)$ and $\K\subseteq \C$ a set of compact objects. If $\Gamma$ denotes the right adjoint to the fully faithful inclusion of the localizing tensor ideal generated by $\K$, i.e. $i\:\C\Ktors:= \Loc_\C^\otimes (\K)\hookrightarrow \C$, then Positselski duality for the cocommutative coalgebra $i\Gamma \1$, recovers the local duality equivalence $\C\Ktors \simeq \C\Kcomp$, in the sense that 
    \[\C\Ktors \simeq \Comod_{i\Gamma \1} \simeq \Contra_{i\Gamma \1} \simeq \C\Kcomp.\]
\end{introthm}



The above theorem can be visualized by the following diagram. 

\begin{center}
    \begin{tikzcd}
        & 
        \Mod_{L\1} 
        \arrow[d, xshift=-2pt] 
        \arrow[rdd, dotted, bend left] 
        & \\
        & 
        \C \arrow[u, xshift=2pt] 
        \arrow[ld, yshift=-2pt, xshift=1pt] 
        \arrow[rd, yshift=2pt, xshift=1pt]                  
        & \\
        \Comod_{\Gamma \1} 
        \arrow[ru, yshift=2pt, xshift=-1pt] 
        \arrow[rr, yshift=2pt, "\simeq"] 
        \arrow[ruu, dotted, bend left] 
        &                                                     
        & \Contra_{\Gamma\1} 
        \arrow[lu, yshift=-2pt, xshift=-1pt] 
        \arrow[ll, yshift=-2pt]
    \end{tikzcd}
\end{center}

It is somewhat unsatisfactory that the category of contramodules is based on the coalgebra $\Gamma\1$ and not on the unit $\Lambda\1$ in $\C\Kcomp$. However, we can quite easily fix this by utilizing the fact that the commutative algebra $\Lambda\1$ is always ``topological'', in the sense that it comes equipped with a commutative pro-dualizable structure. In particular, there is a sequence of dualizable objects $V_k$ such that $\Lambda \1 \simeq \lim_k V_k$, and we can define contramodules over such pro-dualizable commutative algebras as well. 


\begin{introthm}[\cref{thm:contra-is-contra}, \cref{cor:contra_unit_complete}]
    \label{introthm:C}
    Let $\C$ be a symmetric monoidal $\infty$-category generated by dualizable objects. For any cocommutative coalgebra $C\in \C$, there is an equivalence 
    \[\Contra_C(\C) \simeq \Contra_{C^\vee}(\C),\]
    where $C^\vee := \iHom(C,\1)$ is the linear dual. In particular, when $\C$ is stable and compactly generated, then this gives an equivalence $\C\Kcomp \simeq \Contra_{\Lambda \1}(\C)$ for any local duality context $(\C, \K)$. 
\end{introthm}

\section{General preliminaries}

The goal of this section is to introduce comodules and contramodules over an cocommutative coalgebra in some $\infty$-category $\C$. In order to do this we first review some basic facts about coalgebras, monads and comonads. 

We will for the rest of this section work in some fixed presentably symmetric monoidal $\infty$-category $\C$. In other words, $\C$ is a commutative algebra object in $\PrL$, the category of presentable $\infty$-categories and left adjoint functors. In particular, the monoidal product, which we denote by $-\otimes-\:\C\times \C\to\C$ preserves colimits in both variables. We denote the unit of the monoidal structure by $\1$.

\subsection{Coalgebras, monads and comonads}

We denote the category of commutative algebras in $\C$ by $\CAlg(\C)$. These are the coherently commutative ring objects in $\C$. By \cite[2.4.2.7]{Lurie_HA} there is a symmetric monoidal structure on $\C\op$, and we define the category of cocommutative coalgebras in $\C$ to be the category 
\[\cCAlg(\C):= \CAlg(\C\op)\op.\] 
Classical coalgebras will be referred to as \emph{discrete} in order to avoid confusion. 

\begin{proposition}
    The following properties hold for the category $\cCAlg(\C)$. 
    \begin{enumerate}
        \item The forgetful functor 
        \[U\:\cCAlg(\C)\to \C\] 
        is conservative and creates colimits. \label{prop:properties-coalgebras:item1}
        \item The categorical product of two coalgebras $C, D$ is given by the tensor product of their underlying objects $C\otimes D$. \label{prop:properties-coalgebras:item2}
        \item The $\infty$-category $\cCAlg(\C)$ is presentably symmetric monoidal when equipped with the cartesian monoidal structure. In particular, this means that the forgetful functor $U$ is symmetric monoidal. \label{prop:properties-coalgebras:item3}
        \item The forgetful functor $U$ has a lax-monoidal right adjoint 
        \[cf\:\C\to\cCAlg(\C).\] 
        The image of an object $X\in \C$ is called the cofree coalgebra on $X$. \label{prop:properties-coalgebras:item4}
    \end{enumerate}
\end{proposition}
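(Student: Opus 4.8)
The plan is to deduce all four properties from the corresponding (dual) facts about $\CAlg$ in a presentably symmetric monoidal $\infty$-category, applied to $\C^{op}$, and then transport them back along the equivalence $\cCAlg(\C) = \CAlg(\C^{op})^{op}$. The main structural input is that $\C$ is dualizable/presentable data: although $\C^{op}$ is not presentable, the relevant foundational results of \cite{Lurie_HA} about $\CAlg$ over a symmetric monoidal $\infty$-category only need the monoidal product to exist (and, for the colimit and adjoint statements, some preservation/cocompleteness hypotheses), so I would first isolate exactly which hypotheses are needed and check they hold for $(\C^{op}, \otimes)$ as constructed in \cite[2.4.2.7]{Lurie_HA}. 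In particular $\C^{op}$ is complete (since $\C$ is cocomplete) and its monoidal product preserves limits in each variable separately (since $\otimes$ on $\C$ preserves colimits), which is the dual of the hypothesis used for the free-algebra adjunction and for conservativity of the forgetful functor.

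For \eqref{prop:properties-coalgebras:item1}: by \cite[3.2.2.6, 3.2.3.1]{Lurie_HA} the forgetful functor $\CAlg(\C^{op})\to \C^{op}$ is conservative and creates limits; dualizing, $U\colon \cCAlg(\C)\to\C$ is conservative and creates colimits. For \eqref{prop:properties-coalgebras:item2}: the coproduct in $\CAlg(\D)$ of $\E_\infty$-algebras is computed by the underlying tensor product (this is \cite[3.2.4.7]{Lurie_HA} — the tensor product of $\E_\infty$-rings is their coproduct); applying this in $\D = \C^{op}$ and dualizing, the product of two coalgebras $C,D$ in $\cCAlg(\C)$ has underlying object $C\otimes_{\C^{op}} D$, which is literally $C\otimes_\C D$ since the monoidal product on $\C^{op}$ is the same bifunctor as on $\C$ read in the opposite direction. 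For \eqref{prop:properties-coalgebras:item3}: $\CAlg(\D)$ for $\D$ presentably symmetric monoidal is itself presentably symmetric monoidal under the coCartesian structure, by \cite[3.2.4.7, 4.5.4.7]{Lurie_HA}; but $\C^{op}$ is not presentable, so here I would instead argue directly — $\cCAlg(\C)$ is presentable because it is (by \eqref{prop:properties-coalgebras:item1} and Lurie's comonadicity/limit-sketch arguments) an accessible localization-free description, more concretely $\cCAlg(\C)\simeq \CAlg(\C^{op})^{op}$ and one shows $\CAlg(\C^{op})$ is copresentable (presentable in $\PrR$) — and then the Cartesian product, being the underlying tensor product by \eqref{prop:properties-coalgebras:item2}, visibly preserves colimits in each variable since $U$ creates them and $\otimes$ on $\C$ does; that $U$ is symmetric monoidal from the Cartesian structure to $(\C,\otimes)$ then follows because it sends the Cartesian product to $\otimes$ and the terminal coalgebra to $\1$ (the latter using that $\1$ is the cofree-on-terminal, equivalently the final coalgebra, since $\1_{\C^{op}} = \1_\C$ is the initial algebra). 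For \eqref{prop:properties-coalgebras:item4}: since $U$ is a symmetric monoidal functor between presentably symmetric monoidal categories preserving colimits, the adjoint functor theorem gives a right adjoint $cf$, and a right adjoint of a symmetric monoidal left adjoint is automatically lax symmetric monoidal by \cite[7.3.2.7]{Lurie_HA} (doctrinal adjunction).

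The hard part, I expect, will be \eqref{prop:properties-coalgebras:item3} — specifically establishing presentability of $\cCAlg(\C)$ without being able to quote the standard $\CAlg$-over-presentable result, since $\C^{op}$ fails to be presentable. I would handle this by realizing $\cCAlg(\C)$ as the $\infty$-category of coalgebras over the comonad on $\C$ induced by the (symmetric monoidal) cofree-coalgebra adjunction, or alternatively by presenting $\CAlg(\C^{op}) = \mathrm{CMon}(\C^{op})$-style sections of a coCartesian fibration and checking accessibility of the relevant limit-diagram $\infty$-category directly from the fact that $\C$ is presentable; once presentability is in hand, combined with \eqref{prop:properties-coalgebras:item1} which already tells us $U$ creates colimits, the remaining claims (colimit-preservation of the Cartesian product, symmetric monoidality of $U$) are formal. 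A cleaner alternative, which I would pursue if it typechecks, is to invoke that $\cCAlg(\C)$ is the $\infty$-category of cocommutative coalgebras and cite the comonadicity result together with the fact that $\C$ presentable forces the category of coalgebras over an accessible comonad to be presentable; this sidesteps the $\C^{op}$ issue entirely and makes all four items uniform consequences of a single comonadicity statement. Items \eqref{prop:properties-coalgebras:item1}, \eqref{prop:properties-coalgebras:item2}, and \eqref{prop:properties-coalgebras:item4} I expect to be essentially immediate dualizations once the correct citations are pinned down.
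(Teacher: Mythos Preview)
Your overall strategy---dualize the standard $\CAlg$ facts to $\C^{op}$, then transport back---matches the paper's approach for items \eqref{prop:properties-coalgebras:item1}, \eqref{prop:properties-coalgebras:item2}, and \eqref{prop:properties-coalgebras:item4}, and you correctly identify presentability in \eqref{prop:properties-coalgebras:item3} as the only nontrivial point. The paper handles this by citing \cite[3.1.2, 3.1.4]{lurie_2018_ELL1}, where Lurie proves directly that $\cCAlg(\C)$ is presentable when $\C$ is presentably symmetric monoidal; this is essentially your ``sections of a coCartesian fibration and check accessibility'' route, already packaged as a black box. With that reference in hand the paper's proof collapses to three citations plus the adjoint functor theorem, whereas you would be redoing Lurie's argument.

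One caution: your ``cleaner alternative''---deduce presentability from comonadicity over the cofree-coalgebra comonad---is circular as stated. You need the right adjoint $cf$ to exist before you can form that comonad, but your argument for \eqref{prop:properties-coalgebras:item4} (and hence the existence of $cf$) invokes the adjoint functor theorem, which already requires $\cCAlg(\C)$ to be presentable. So that shortcut does not typecheck; you are forced into the direct accessibility argument, which is exactly what \cite{lurie_2018_ELL1} supplies.
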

\begin{proof}
    The presentability and creation of colimits by the forgetful functor is proven in \cite[3.1.2]{lurie_2018_ELL1} and \cite[3.1.4]{lurie_2018_ELL1}. The cartesian symmetric monoidal structure on $\cCAlg(\C)$ follows from \cite[3.2.4.7]{Lurie_HA}. The last item follows from the first three together with the adjoint functor theorem, \cite[5.5.2.9]{lurie_09}. 
\end{proof}

Given any $\infty$-category $\D$, the category of endofunctors $\Fun(\D, \D)$ can be given the structure of a monoidal category via composition of functors, see \cite[1.15]{christ_2023}. 

\begin{definition}
    A monad $M$ on $\D$ is an associative algebra in $\Fun(\D, \D)$. Similarly, a comonad $C$ is a coassociative coalgebra in $\Fun(\D, \D)$. 
\end{definition}

\begin{example}
    Any adjunction of $\infty$-categories $F: \D \leftrightarrows \E : G$
    gives rise to a monad $GF\: \D\to \D$ and a comonad $FG\:\mathcal{E}\to \mathcal{E}$. We call these the \emph{adjunction monad} and \emph{adjunction comonad} of the adjunction $F\dashv G$. 
\end{example}

The category $\D$ is left tensored over $\Fun(\D, \D)$ via evaluation of functors. Hence, for any monad $M$ on $\D$ we get a category of left modules over $M$ in $\D$. 

\begin{definition}
    Let $\D$ be an $\infty$-category and $M$ a monad on $\D$. We define the \emph{Eilenberg--Moore category} of $M$ to be the category of left modules $\LMod_M(\D)$. Objects in $\LMod_M(\D)$ are referred to as \emph{modules over M}. 
\end{definition}

\begin{remark}
    Dually, any comonad $C$ on $\D$ gives rise to a category of left comodules over $C$ in $\D$. We also call this the Eilenberg--Moore category of $C$, and denote it by $\LComod_C(\D)$. Its objects are referred to as \emph{comodules} over $C$. 
\end{remark}

Given a monad $M$ on $\D$ we have a forgetful functor 
\[U_M\: \LMod_M(\D)\to \D.\] 
By \cite[4.2.4.8]{Lurie_HA} this functor admits a left adjoint 
\[F_M\: \D\to \LMod_M(\D)\] 
given on objects by $X\longmapsto MX$. We call this the \emph{free module} functor. The adjunction $F_M\dashv U_M$ is called the \emph{free-forgetful} adjunction of $M$. 

\begin{definition}
    An adjunction is said to be \emph{monadic} if it is equivalent to the free-forgetful adjunction $F_M\dashv U_M$ of a monad $M$. A functor $G\: \mathcal{E}\to \D$ is called \emph{monadic} if it is equivalent to the right adjoint $U_M$ for some monadic adjunction. 
\end{definition}

The existence of the free-forgetful adjunction for a monad $M$ implies that any monad is the adjunction monad of some adjunction. However, there can be more than one adjunction $F\dashv G$ such that $M$ is the adjunction monad for this adjunction.

\begin{definition}
    Let $\D$ be an $\infty$-category and $M$ a monad on $\D$. A left $M$-module $B\in \LMod_M(\D)$ is \emph{free} if it is equivalent to an object in the image of $F_M$. The full subcategory of free modules is called the \emph{Kleisli category} of $M$, and is denoted $\LMod_M\fr(\D)$. 
\end{definition}

The free-forgetful adjunction restricts to an adjunction on the Kleisli category: 
\begin{center}
\begin{tikzcd}[sep = large]
    \D \arrow[r, yshift=2pt, "{F_M}"] & \LMod_M\fr(\D) \arrow[l, yshift=-2pt, "{U_M\fr}"].
\end{tikzcd}
\end{center}
By \cite[1.8]{christ_2023} this is the minimal adjunction whose adjunction monad is equivalent to $M$---in particular, the forgetful functor $U_M\fr$ is fully faithful.  

Using Lurie's $\infty$-categorical version of the Barr--Beck theorem we can also identify the free-forgetful adjunction as the maximal adjunction with adjunction monad $M$. 

\begin{theorem}[{\cite[4.7.3.5]{Lurie_HA}}]
    \label{thm:Lurie-BB}
    A functor $G\: \mathcal{E}\to \D$ of $\infty$-categories is monadic if and only if 
    \begin{enumerate}
        \item $G$ admits a left adjoint,
        \item $G$ is conservative, and
        \item the category $\mathcal{E}$ admits colimits of $G$-split simplicial objects, and these are preserved under $G$. 
    \end{enumerate}
\end{theorem}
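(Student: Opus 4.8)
We must prove Lurie's monadicity criterion, so the plan is to treat the two implications separately; essentially all the content sits in the ``if'' direction. \textbf{The easy direction.} If $G \simeq U_M$ for a monad $M$ on $\mathcal{E}$, then (1) is the existence of the free module functor $F_M$ recalled above; (2) holds because an inverse in $\mathcal{E}$ to the underlying morphism of a map of $M$-modules is automatically $M$-linear, so $U_M$ is conservative; and (3) holds because a $U_M$-split simplicial object $Y_\bullet$ in $\LMod_M(\mathcal{E})$ has a colimit computed in $\mathcal{E}$, namely the target of the split augmentation on $U_M Y_\bullet$ equipped with its canonically induced $M$-action, which $U_M$ visibly preserves. (Here one invokes the standard facts about split simplicial objects, cf.\ \cite[\S 4.7]{Lurie_HA}.)

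\textbf{The hard direction.} Assume $G$ has a left adjoint $F$, with unit $\eta\: \mathrm{id}_{\mathcal{E}} \to GF$ and counit $\varepsilon\: FG \to \mathrm{id}_{\D}$, is conservative, and $\D$ admits and $G$ preserves colimits of $G$-split simplicial objects. Set $M := GF$, the adjunction monad on $\mathcal{E}$, and let $K\: \D \to \LMod_M(\mathcal{E})$ be the comparison functor sending $d$ to $Gd$ with action $G\varepsilon_d\: GFGd \to Gd$; it satisfies $U_M \circ K \simeq G$ and $K \circ F \simeq F_M$, and I want to show it is an equivalence. The key construction is a left adjoint $L \dashv K$: for an $M$-module $(e, \alpha\: Me \to e)$ form the two-sided bar construction $\mathrm{Bar}_\bullet(F, M, e) \in \Fun(\Delta\op, \D)$ with $n$-th term $F M^n e$ and faces built from $\alpha$, the multiplication of $M$, and $\varepsilon$. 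Applying $G$ produces the simplicial object $[n] \mapsto M^{n+1} e$ of $\mathcal{E}$, which is split via the extra degeneracies coming from $\eta$; hence $\mathrm{Bar}_\bullet(F, M, e)$ is $G$-split, its geometric realization exists in $\D$ by hypothesis, and we define $L(e) := |\mathrm{Bar}_\bullet(F, M, e)|$. Promoting this level-wise recipe to an honest $\infty$-functor $L$ together with the adjunction $L \dashv K$ is the substantive step, carried out via the $\infty$-categorical functoriality of the two-sided bar construction (cf.\ \cite[\S 4.7]{Lurie_HA}).

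It then remains to check that the unit and counit of $L \dashv K$ are equivalences. For the counit $LK \to \mathrm{id}_{\D}$: since $G$ is conservative it suffices to check after applying $G$, and since $G$ commutes with the $G$-split realization we get $GLK(d) \simeq |M^{\bullet+1} Gd|$, whose augmentation to $Gd$ induced by $G\varepsilon_d$ is split, forcing the realization to be $Gd$ compatibly with the counit. For the unit $\mathrm{id} \to KL$: since $U_M$ is conservative (by the easy direction applied to $M$) it suffices to identify $GL(e) \simeq |M^{\bullet+1} e|$ with $e$ as an $M$-module, which follows since the augmentation to $e$ given by $\alpha$ is split by $\eta$ and the splitting is compatible with the $M$-action. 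Hence $K$ is an equivalence, and since $U_M \circ K \simeq G$ the functor $G$ is monadic with monad $M = GF$.

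The main obstacle is precisely the construction of $L$ as a genuine $\infty$-functor left adjoint to $K$. In ordinary category theory the bar construction and the adjunction it produces are routine, but $\infty$-categorically one has to organize all the simplicial coherences and carefully compare the two natural augmentations appearing above; this is the technical heart of the argument and rests on the theory of $\infty$-operads and (co)cartesian fibrations. A secondary point requiring care is that the class of $G$-split simplicial objects behaves well enough that $G$ (and likewise $U_M$) genuinely commutes with every geometric realization used in the verification of the unit and counit.
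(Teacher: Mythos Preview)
The paper does not prove this theorem: it is stated with attribution to \cite[4.7.3.5]{Lurie_HA} and used as a black box, so there is no in-paper argument to compare against. Your outline is a faithful sketch of the standard Barr--Beck argument (and of Lurie's $\infty$-categorical proof): the comparison functor $K$, the bar resolution to build a left adjoint $L$, and the split-simplicial verification of the unit and counit are exactly the ingredients used there. The caveats you flag---promoting $L$ to an honest $\infty$-functor and handling the coherences---are precisely where Lurie's machinery does the real work, so your proposal is correct at the level of a proof sketch but, as you acknowledge, not self-contained without that input.
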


\begin{remark}
    By definition, if a functor $G\: \mathcal{E}\to \D$ is monadic, then there is an equivalence of categories $\mathcal{E}\simeq \LMod_{GF}(\D)$, where $F$ is the left adjoint of $G$. 
\end{remark}

\begin{definition}
    Dually, given any comonad $C$ on an $\infty$-category $\mathcal{E}$, there is a forgetful functor $U_C\: \LComod_C(\mathcal{E})\to \mathcal{E}$, which admits a right adjoint 
    \[F_C\: \mathcal{E}\to \LComod_C(\mathcal{E}).\] 
    We call this the \emph{cofree comodule functor}, and hence the adjunction $U_C\dashv F_C$ is called the \emph{cofree-forgetful} adjunction of $C$. Any adjunction 
    \begin{center}
    \begin{tikzcd}[sep = large]
        \D \arrow[r, yshift=2pt, "F"] & \mathcal{E} \arrow[l, yshift=-2pt, "G"]
    \end{tikzcd}
    \end{center}
    equivalent to a cofree-forgetful adjunction for some comonad $C$ on $\mathcal{E}$ is said to be \emph{comonadic}. A functor $F\: \D\to \mathcal{E}$ is said to be \emph{comonadic} if it is equivalent to the left adjoint of a comonadic adjunction. 
\end{definition}

\begin{remark}
    The essential image of $F_C$ in $\LComod_C(\mathcal{E})$ determines the Kleisli category $\LComod_C\fr(\mathcal{E})$ of cofree coalgebras. The cofree-forgetful adjunction restricts to an adjunction on cofree comodules, 
    \begin{center}
    \begin{tikzcd}[sep = large]
        \Comod_C\fr(\mathcal{E}) \arrow[r, yshift=2pt, "{U_C\fr}"] & \mathcal{E} \arrow[l, yshift=-2pt, "F_C"]
    \end{tikzcd}
    \end{center}
    which is the minimal adjunction whose adjunction comonad is $C$. 
\end{remark}

\subsection{Comodules and contramodules}
\label{ssec:comodules_and_contramodules}

Recall that we have fixed a presentably symmetric monoidal $\infty$-category $\C$. Let us now construct the monads and comonads of interest for this paper. We want to mention that the paper \cite{hristova-jones-rumynin_2023} has been of influence for this section.

\begin{example}
    \label{ex:algebra-module-monad}
    Let $A\in \CAlg(\C)$ be a commutative algebra object in $\C$. The algebra structure on $A$ induces an algebra structure on the endofunctor $A\otimes(-)\:\C\to \C$, hence it is a monad on $\C$---see \cite[1.15]{christ_2023}. By \cite[1.17]{christ_2023} the Eilenberg--Moore category of this monad is equivalent to the category of modules over $A$ in $\C$. As $A$ is commutative we denote this by $\Mod_A(\C)$. As $\C$ is presentable and the monad $A\otimes(-)$ preserves colimits, there is a right adjoint $\iHom(A,-)\:\C\to \C$. This is a comonad on $\C$. Since these form an adjoint monad-comonad pair, their Eilenberg--Moore categories are equivalent,
    \[\Mod_A(\C)\simeq \LMod_{A\otimes(-)}(\C)\simeq \LComod_{\iHom(A,-)}(\C),\]
    see \cite[V.8.2]{maclane-moerdijk_1994} in the $1$-categorical situation. The $\infty$-categorical version is exactly the same, and follows from the monadicity and comonadicity of the adjunctions. 
\end{example}

\begin{remark}
    \label{rm:internal-adjunction}
    We also mention that the hom-tensor adjunction is an \emph{internal adjunction}, in the sense that there is an equivalence of internal hom-objects 
    \[\iHom(X\otimes Y, Z) \simeq \iHom(X, \iHom(Y, Z)).\]
    This follows from the hom-tensor adjunction together with a Yoneda argument. 
\end{remark}


The above example changes in an interesting way when replacing the algebra $A$ with a coalgebra $C$. 

\begin{example}
    \label{ex:coalgebra-comonad}
    Let $C\in \cCAlg(\C)$ be a cocommutative coalgebra object in $\C$. The coalgebra structure on $C$ induces a coalgebra structure on the endofunctor $C\otimes(-)\:\C\to\C$, hence it is a comonad on $\C$. By an argument dual to \cite[1.17]{christ_2023} the Eilenberg--Moore category of this comonad is equivalent to the category of comodules over the coalgebra $C$, defined as 
    \[\Comod_C(\C):= (\Mod_{C}(\C\op))\op.\] 
    As the functor $C\otimes (-)$ is accessible, the category $\Comod_C(\C)$ is presentable by \cite[3.8]{ramzi_2024}---see also \cite[2.1.11]{peroux_2020}. 
\end{example}

\begin{example}
    Let $C\in \cCAlg(\C)$ be a cocommutative coalgebra in $\C$. The comonad $C\otimes (-)$ preserves colimits, hence it has a right adjoint $\iHom(C,-)$. The coalgebra structure on $C$, together with the internal adjunction equivalence from \cref{rm:internal-adjunction}, gives the endofunctor $\iHom(C,-)$ an algebra structure. It is an algebra structure and not a coalgebra structure, as the internal hom is contravariant in the first variable. Hence, $\iHom(C,-)$ is a monad on $\C$.
\end{example} 

Notice that the pair $C\otimes(-)\dashv \iHom(C,-)$ is not an adjoint monad-comonad pair; it is an an adjoint comonad-monad pair. The difference is subtle, but it means, in particular, that their Eilenberg--Moore categories might not be equivalent. This possible non-equivalence is the \emph{raison d'être} for contramodules, which we can then define as follows.



\begin{definition}
    Let $C\in \cCAlg(\C)$ be a cocommutative coalgebra. A \emph{contramodule} over $C$ is a module over the internal hom-monad $\iHom(C,-)\:\C\to \C$. The category of contramodules over $C$ in $\C$ is the corresponding Eilenberg--Moore category, which will be denoted $\ContraC(\C)$. 
\end{definition}

\begin{notation}
    Since we are working in a fixed category $\C$ we will often simply write $\ContraC$ for the category of contramodules, and $\ComodC$ for the category of comodules. 
\end{notation}

\begin{notation}
    We denote the mapping space in $\Comod_C$ by $\Hom_C$ and the mapping space in $\Contra_C$ by $\Hom^C$. Similarly, the forgetful functors will be denoted 
    \[U_C\:\Comod_C\to \C \quad\text{and}\quad  U^C\: \Contra_C\to \C\] 
    respectively, while their adjoints---the cofree and free functors---will be denoted 
    \[C\otimes (-)\:\C\to \Comod_C \quad \text{and}\quad \iHom(C, -)\: \C\to \Contra_C,\] 
    hoping that it is clear from context whether we use them as above or as endofunctors on $\C$.
\end{notation}

The following proposition is standard for monads and comonads, see for example \cite[5.7]{riehl-verity_2015}. 

\begin{proposition}
    \label{prop:creates-colimits-and-limits}
    If $C$ is a cocommutative coalgebra in $\C$, then the forgetful functor $U_C\:\Comod_C\to \C$ creates colimits. Similarly, the forgetful functor $U^C\:\Contra_C\to \C$ creates limits. 
\end{proposition}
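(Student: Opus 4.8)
The plan is to deduce the statement directly from the corresponding general fact about monads and comonads, exactly as the sentence preceding the proposition advertises. Recall that $\Comod_C = \LComod_{C\otimes(-)}(\C)$ is the Eilenberg--Moore category of the comonad $C\otimes(-)$ on $\C$, while $\Contra_C = \LMod_{\iHom(C,-)}(\C)$ is the Eilenberg--Moore category of the monad $\iHom(C,-)$ on $\C$. So the claim is really two instances of a single principle: \emph{the forgetful functor from modules over a monad creates limits, and the forgetful functor from comodules over a comonad creates colimits}. First I would cite this principle in the $\infty$-categorical setting. The module half is \cite[4.2.3.3, 4.2.3.5]{Lurie_HA}: for a monad $M$ on $\D$, the forgetful functor $U_M\:\LMod_M(\D)\to\D$ creates (in particular preserves and detects) all limits that exist in $\D$, and also creates those colimits that $\D$ admits and that $M$ preserves. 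Applying this to $M = \iHom(C,-)$ on $\C$ gives that $U^C$ creates limits. The comodule half is formally dual: applying the module statement in $\C\op$ to the monad on $\C\op$ induced by the comonad $C\otimes(-)$ on $\C$ (using that $\Comod_C(\C)^{op} \simeq \LMod(\C\op)$ for that monad), one gets that $U_C$ creates colimits.

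An alternative, more self-contained route is to build the limits and colimits by hand. For a diagram $p\:K\to\Contra_C$, the underlying diagram $U^C p$ has a limit $X = \lim U^C p$ in $\C$ since $\C$ is presentable. One then equips $X$ with a contramodule structure: the structure maps $\iHom(C, p(k))\to p(k)$ assemble to a cone, and since $\iHom(C,-)$ is a right adjoint it preserves limits, so $\iHom(C, X) \simeq \lim \iHom(C, p(k))$ receives a canonical map to $X$; checking the contra-associativity and contra-unitality coherences reduces to the universal property of the limit. Uniqueness of this lift, and the fact that it is a limit in $\Contra_C$, again follow from the universal property. The colimit case for $\Comod_C$ is dual: colimits exist in $\C$, the comonad $C\otimes(-)$ preserves them (it is a left adjoint, or more simply the monoidal product preserves colimits in each variable, which is part of the standing hypothesis on $\C$), so a colimit of comodules inherits a comodule structure and is computed underlying.

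I would present the short version: invoke the general monad/comonad statement and specialise. The main (really the only) subtlety is bookkeeping the op-duality correctly for the comodule half---one must be careful that $C\otimes(-)$ is a \emph{comonad} on $\C$, equivalently a monad on $\C\op$, and that passing to $\C\op$ swaps limits and colimits, so ``creates limits for modules'' becomes ``creates colimits for comodules.'' There is no real obstacle here; the point is just to note that the standing assumption that $-\otimes-$ preserves colimits in each variable is what makes the comodule half work without further hypotheses, whereas the contramodule half needs no hypothesis at all beyond existence of limits in $\C$, which holds since $\C$ is presentable.
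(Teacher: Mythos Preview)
Your proposal is correct and matches the paper's approach: the paper does not give a proof at all, merely citing the general monad/comonad fact (from \cite[5.7]{riehl-verity_2015}) in the sentence preceding the statement. Your write-up simply unpacks that citation, using Lurie's \cite[4.2.3.3, 4.2.3.5]{Lurie_HA} instead and spelling out the op-duality for the comodule half; the optional hands-on construction you sketch is extra detail beyond what the paper provides.
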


\subsection{The dual monoidal Barr--Beck theorem}
\label{ssec:dual-barr-beck}

Lurie's version of the Barr--Beck monadicity theorem, see \cite[Section 4.7]{Lurie_HA}, allows us to recognize monadic functors from simple criteria. Combined with a recognition theorem for when a monoidal monadic functor is equivalent to $R\otimes(-)$ for some commutative ring $R$, Mathew--Neumann--Noel extended the Barr--Beck theorem to a monoidal version. In this short section we prove a categorical dual version of their result. 

Let $F:\C\rightleftarrows \D:G$ be a pair of adjoint functors between presentably symmetric monoidal $\infty$-categories, such that the left adjoint $F$ is symmetric monoidal. This means that the right adjoint $G$ is lax-monoidal, and does in particular preserve algebra objects. There is for any two objects $X\in \C$ and $Y\in \D$, a natural map
\[F(G(Y)\otimes_\C X)\overset{\simeq}\to FG(Y)\otimes_\D F(X) \to Y\otimes_\D F(X)\]
where the first map is by the symmetric monoidality of $F$, and the second is given by the adjunction counit. By the adjunction property, there is an adjoint map 
\[G(Y)\otimes_\C X \to G(Y\otimes_\D F(X)).\]

\begin{definition}
    An adjoint pair $F\dashv G$ as above is said to satisfy the \emph{monadic projection formula} if the map 
    \[G(Y)\otimes_\C X \to G(Y\otimes_\D F(X))\]
    is an equivalence for all $X\in \C$ and $Y\in \D$. 
\end{definition}

We now state the monoidal Barr--Beck theorem of Mathew, Naumann and Noel. 

\begin{theorem}[{\cite[5.29]{mathew-naumann-noel_2017}}]
    \label{thm:monoidal-BB}
    Let $F:\C\rightleftarrows \D:G$ be an adjunction of presentably symmetric monoidal $\infty$-categories, such that the left adjoint $F$ is symmetric monoidal. If, in addition
    \begin{enumerate}
        \item $G$ is conservative,
        \item $G$ preserves arbitrary colimits, and
        \item $F\dashv G$ satisfies the monadic projection formula,
    \end{enumerate}
    then the adjunction is monoidally monadic, and there is an equivalence of monads 
    \[GF \simeq G(\1_\D)\otimes_\C(-).\]
    In particular, there is an equivalence $\D\simeq \Mod_{G(\1_\D)}(\C)$ of symmetric monoidal $\infty$-categories. 
\end{theorem}

\begin{remark}
    Note that this result is stated only for stable $\infty$-categories in \cite{mathew-naumann-noel_2017}, but also holds unstably by a combination of Lurie's $\infty$-categorical Barr--Beck theorem, \cref{thm:Lurie-BB}, together with the fact that the monadic projection formula applied to the unit gives an equivalence of monads by \cite[3.6]{elmanto-kolderup_2020}. 
\end{remark}

There is also a dual version of the classical Barr--Beck theorem, see for example \cite[4.5]{brantner-mathew_2023}. We wish to extend this to a monoidal version. 

Let $F:\C\rightleftarrows \D:G$ pair of adjoint functors between symmetric monoidal categories, such that the right adjoint $G$ is symmetric monoidal. This means that the left adjoint $F$ is op-lax-monoidal, and does in particular preserve coalgebra objects. There is for any two objects $X\in \C$ and $Y\in \D$, a natural map
\[X\otimes_\C G(Y)\to GF(X)\otimes_\C G(Y) \overset{\simeq}\to G(F(X)\otimes_\D Y)\]
where the first map is given by the adjunction unit and the second by the symmetric monoidality of $G$. By the adjunction property, there is an adjoint map 
\[F(X\otimes_\C G(Y)) \to F(X)\otimes_\D Y .\]

\begin{definition}
    An adjoint pair $F\dashv G$ as above is said to satisfy the \emph{comonadic projection formula} if the map 
    \[F(X\otimes_\C G(Y)) \to F(X)\otimes_\D Y\]
    is an equivalence for all $X\in \C$ and $Y\in \D$. 
\end{definition}

\begin{theorem}
    \label{thm:dual-monoidal-BB}
    Let $F:\C\rightleftarrows \D:G$ be an adjunction of presentably symmetric monoidal $\infty$-categories, such that the right adjoint $G$ is symmetric monoidal. If, in addition
    \begin{enumerate}
        \item $F$ is conservative,
        \item $F$ preserves arbitrary limits, and
        \item $F\dashv G$ satisfies the comonadic projection formula,
    \end{enumerate}
    then the adjunction is comonadic, and there is an equivalence of comonads 
    \[FG\simeq F(\1_C)\otimes_\D (-)\] 
    In particular, this gives an equivalence $\C\simeq \Comod_{F(\1_\C)}(\D).$
\end{theorem}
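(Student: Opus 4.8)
The plan is to run the proof of the unstable case of \cref{thm:monoidal-BB} --- as indicated in the remark following it --- in categorical dual form, or, what amounts to the same thing, to apply that proof to the opposite categories $\C\op$ and $\D\op$ with their induced symmetric monoidal structures \cite[2.4.2.7]{Lurie_HA}. The argument has two halves: first establish comonadicity of $F$ via the dual Barr--Beck theorem, and then identify the adjunction comonad $FG$ with $F(\1_\C)\otimes_\D(-)$ using the comonadic projection formula evaluated at the unit.

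First I would verify the hypotheses of the dual Barr--Beck theorem (the comonadic analogue of \cref{thm:Lurie-BB}, cf.\ \cite[4.5]{brantner-mathew_2023}) for $F\dashv G$: the functor $F$ admits a right adjoint, namely $G$; it is conservative by hypothesis~(1); and $\C$ admits limits of $F$-cosplit cosimplicial objects, preserved by $F$. The latter holds because $\C$ is presentable, hence has all small limits --- in particular those indexed by $\Delta$ --- and these are preserved by $F$ thanks to hypothesis~(2). Thus $F$ is comonadic, giving an equivalence $\C\simeq \LComod_{FG}(\D)$ over $\D$.

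Next I would identify the comonad. Since $G$ is symmetric monoidal, its left adjoint $F$ is op-lax monoidal, so $F(\1_\C)$ is canonically a cocommutative coalgebra in $\D$ and $F(\1_\C)\otimes_\D(-)$ is a comonad on $\D$ by \cref{ex:coalgebra-comonad}. Evaluating the comonadic projection formula at $X=\1_\C$, together with the equivalence $G(\1_\D)\simeq\1_\C$ coming from symmetric monoidality of $G$, produces a natural equivalence of underlying endofunctors $F(\1_\C)\otimes_\D(-)\simeq FG$. Promoting this to an equivalence of comonads is the categorical dual of \cite[3.6]{elmanto-kolderup_2020}: that argument is formal and goes through whenever the left adjoint is symmetric monoidal and the relevant (co)limits exist, so it applies to $G\op\:\D\op\to\C\op$, for which hypothesis~(2) translates into the colimit condition of \cref{thm:Lurie-BB} and the comonadic projection formula becomes the monadic projection formula. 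Combining the two halves and invoking \cref{ex:coalgebra-comonad} once more yields
\[\C\simeq \LComod_{FG}(\D)\simeq \LComod_{F(\1_\C)\otimes_\D(-)}(\D)\simeq \Comod_{F(\1_\C)}(\D).\]

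The main obstacle is precisely this comonad-level identification: the equivalence of underlying functors is an immediate consequence of the projection formula, but matching the comultiplications and counits --- equivalently, carrying out the passage to $\C\op$ and $\D\op$ carefully, keeping in mind that opposites of presentable $\infty$-categories need not be presentable, so one must use the \emph{argument} behind \cref{thm:monoidal-BB} rather than its statement --- is where the genuine work lies.
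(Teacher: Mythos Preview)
Your proposal is correct and follows essentially the same route as the paper's proof: comonadicity via the dual Barr--Beck theorem of \cite[4.5]{brantner-mathew_2023}, then a dual of \cite[3.6]{elmanto-kolderup_2020} to produce a map of comonads $FG\to F(\1_\C)\otimes_\D(-)$, which the projection formula at the unit shows is an equivalence, followed by \cref{ex:coalgebra-comonad} to identify the Eilenberg--Moore category with $\Comod_{F(\1_\C)}(\D)$. Your write-up is in fact a bit more explicit than the paper's, particularly in spelling out the verification of the Barr--Beck hypotheses and in flagging the subtlety that one must dualize the \emph{argument} of \cref{thm:monoidal-BB} rather than its statement (since $\C\op$ need not be presentable).
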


\begin{remark}
    Before the proof, let us explain intuitively why the statement makes sense. The unit $\1_\C$ in a presentably symmetric monoidal $\infty$-category $\C$ is both a commutative algebra and a cocommutative coalgebra. In the above adjunction we have that the right adjoint $G$ is symmetric monoidal, hence its left adjoint $F$ is op-lax monoidal. In particular, it sends coalgebras to coalgebras, meaning that $F(\1_\C)$ is an cocommutative coalgebra in $\D$. By \cref{ex:coalgebra-comonad} tensoring with $F(\1_\C)$ is a comonad, not a monad, as for \cref{thm:monoidal-BB}. 
\end{remark}

\begin{proof}
    By \cite[4.5]{brantner-mathew_2023} the adjunction is comonadic. A dual version of \cite[3.6]{elmanto-kolderup_2020} shows that there is a map of comonads 
    \[\phi\: FG \to F(\1_\C)\otimes_\D (-),\] 
    and consequently an adjunction 
    \begin{center}
        \begin{tikzcd}[sep = large]
            \Comod_{FG}(\D) \arrow[r, yshift=2pt, "\phi_*"] & \Comod_{F(\1_\C)\otimes_\D (-)}(\D) \arrow[l, yshift=-2pt, "\phi^*"]
        \end{tikzcd}
    \end{center}
    By applying the projection formula to the unit $\1_\C$ we get that $\phi$ is a natural equivalence, which means that the adjunction $(\phi_*, \phi^*)$ is an adjoint equivalence. By \cref{ex:coalgebra-comonad} the Eilenberg--Moore category of the comonad $F(\1_\C)\otimes_\D (-)$ is equivalent to the category of comodules over the cocommutative coalgebra $F(\1_\C)$, finishing the proof. 
\end{proof}

\begin{remark}
    \label{rm:monoidal-structure-comodules}
    It would be nice to understand when the above result gives an equivalence of symmetric monoidal categories. For this one would first need a symmetric monoidal structure on $\Comod_C$. In the dual situation of $\Mod_R(\C)$---the category of modules over a commutative algebra $R\in \C$---this is done by Lurie's relative tensor product, see \cite[Section 4.5.2]{Lurie_HA}. But, for such a relative monoidal product to exist on $\Comod_C$ one needs the tensor product in $\C$ to commute with cosifted limits, which is rarely the case. However, as we will see in the following section, we can in fact obtain a symmetric monoidal structure and a symmetric monoidal equivalence in certain situations---for example when the comonad is idempotent. 
\end{remark}

\section{Positselski duality}
\label{sec:positselski-duality}

Classical Positselski duality, usually referred to as the comodule-contramodule correspondence, is an adjunction between comodules and contramodules over a discrete $R$-coalgebra $C$, where $R$ is an algebra over a field $k$. In particular, the categories involved are abelian, which makes some constructions easier. For example, the monoidal structure on $\Mod_R$ induces monoidal structures on $\Comod_C$ via the relative tensor construction---given by a certain equalizer. For $\infty$-categories the relative tensor construction is more complicated, as we need the monoidal structure to behave well with all higher coherencies, as mentioned in \cref{rm:monoidal-structure-comodules}. We can, however, restrict our attention to a certain type of coalgebra, fixing these issues. This also puts us in the setting we are interested in regarding local duality---see \cref{ssec:local-duality}. 

\subsection{Coidempotent coalgebras}
\label{ssec:coidempotent-coalgebras}

We now restrict our attention to a special class of coalgebras, which will be our focus on for the remainder of the paper. 

\begin{definition}
    A cocommutative coalgebra $C\in \cCAlg(\C)$ is said to be \emph{coidempotent} if the comultiplication map $\Delta\: C\to C\otimes C$ is an equivalence. 
\end{definition}


The first reason for our focus on coidempotent coalgebras is that their categories of comodules inherit a symmetric monoidal structure from $\C$, which is rarely the case for general coalgebras, see \cref{rm:monoidal-structure-comodules}. 

\begin{lemma}
    \label{lm:coidempotent-then-comod-monoidal}
    Let $C$ be a coidempotent cocommutative coalgebra in $\C$. The category of $C$-comodules $\Comod_C$ inherits the structure of a presentably symmetric monoidal $\infty$-category, making the cofree comodule functor a symmetric monoidal smashing colocalization. Furthermore, any $C$-comodule is cofree, meaning that the fully faithful inclusion $\Comod_C\fr \hookrightarrow \Comod_C$ is a symmetric monoidal equivalence. 
\end{lemma}
\begin{proof}
    By a dual version of \cite[4.8.2.4]{Lurie_HA}, together with \cref{ex:coalgebra-comonad}, the category $\Comod_C$ is a colocalization of $\C$. This implies, by a dual version of \cite[4.8.2.7]{Lurie_HA}, that $\Comod_C$ inherits a symmetric monoidal structure from $\C$. A dual version of \cite[4.8.2.10]{Lurie_HA} implies that the inclusion of cofree comodules to all comodules is an equivalence. Finally, as mentioned in \cref{ex:coalgebra-comonad}, the category $\Comod_C$ is presentable, which together with \cref{prop:creates-colimits-and-limits} implies that $\Comod_C$ is presentably symmetric monoidal. 
\end{proof}

\begin{remark}
    \label{rm:unique-structure}
    As stated in \cref{lm:coidempotent-then-comod-monoidal}, any comodule over a coidempotent coalgebra is cofree. This implies that any comodule has a unique comodule structure. In particular, the structure map $M \to C\otimes M$ for any $C$-comodule $M$ is an equivalence, meaning that the induced tensor product on $\Comod_C$ can be interpreted as simply taking the tensor product of the underlying objects in $\C$. More precisely, the monoidal structure on $\Comod_C$ is given by 
    \[M\otimes_C N := C\otimes (M\otimes N),\] 
    but by the uniqueness of comodule structures, this is simply $M\otimes N$ when treated as an object in $\C$. 
\end{remark}

    



\begin{lemma}
    The symmetric monoidal structure on the category $\Comod_C$ is closed. 
\end{lemma}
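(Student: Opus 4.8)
The plan is to exhibit an explicit internal hom on $\Comod_C$ and verify the tensor-hom adjunction, leveraging the concrete description of the relative tensor product from the previous lemma. Since $C$ is idempotent, we have shown $M\otimes_C N \simeq C\otimes (M\otimes N)$, where on the right $M\otimes N$ denotes the tensor product of the underlying objects in $\C$ and $C\otimes(-)$ is the cofree comodule functor. The natural candidate for the internal hom in $\Comod_C$ is
\[
\iHom_C(M,N) := C\otimes \iHom_\C(U_C M, U_C N),
\]
i.e. the cofree comodule on the internal hom in $\C$ of the underlying objects. One should first check this is well-defined as a comodule, which is immediate since it lies in the image of the cofree functor.

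The key step is then to verify the adjunction $\Hom_C(L\otimes_C M, N)\simeq \Hom_C(L, \iHom_C(M,N))$ naturally in all three variables. Here I would use that the cofree functor $C\otimes(-)\colon \C\to\Comod_C$ is right adjoint to the (fully faithful) forgetful functor $U_C$, so that for any comodule $L$ and any object $Y\in\C$ one has $\Hom_C(L, C\otimes Y)\simeq \Hom_\C(U_C L, Y)$. Applying this twice, together with the idempotent-tensor formula and the internal adjunction in $\C$ (\cref{rm:internal-adjunction}), gives
\[
\Hom_C(L\otimes_C M, N) \simeq \Hom_\C(U_C L\otimes U_C M,\ U_C N) \simeq \Hom_\C(U_C L,\ \iHom_\C(U_C M, U_C N)) \simeq \Hom_C(L,\ \iHom_C(M,N)),
\]
where the first equivalence uses that $L\otimes_C M \simeq C\otimes(U_C L\otimes U_C M)$ is cofree and the last uses that $\iHom_C(M,N)$ is cofree. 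Coherence of these equivalences — i.e. that they assemble to an equivalence of mapping-space-valued functors, hence to an internal adjunction — follows by a Yoneda argument, exactly as in \cref{rm:internal-adjunction}; alternatively one can invoke that $U_C$ is a symmetric monoidal smashing colocalization (\cref{lm:idempotent-then-comod-monoidal}) and that closedness transports along such colocalizations, since the localization is compatible with the closed structure on $\C$.

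The main obstacle I anticipate is bookkeeping around the smashing colocalization rather than any genuine difficulty: one must make sure the symmetric monoidal structure on $\Comod_C$ constructed in the preceding lemma is precisely the one for which $U_C(M\otimes_C N)\simeq C\otimes(U_C M\otimes U_C N)$ as objects of $\C$ under $U_C$, so that the underlying-object manipulations above are legitimate. Since $U_C$ is fully faithful and colimit-creating and the monoidal structure was defined so that the cofree functor is symmetric monoidal, this compatibility is built in, and the internal hom computed in $\C$ descends. So the cleanest writeup is: observe $\Comod_C$ is a smashing colocalization of the closed symmetric monoidal $\C$ whose colocalization functor $C\otimes(-)$ is symmetric monoidal; then closedness of $\Comod_C$ is a formal consequence, with internal hom $C\otimes\iHom_\C(-,-)$, dual to the standard fact that a smashing localization of a closed symmetric monoidal category is closed (cf. \cite[2.2.1.9]{Lurie_HA} and the reasoning in \cref{lm:idempotent-then-comod-monoidal}).
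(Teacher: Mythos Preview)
Your argument is correct, but it takes a genuinely different route from the paper's. The paper proves closedness abstractly via the adjoint functor theorem: since the forgetful functor $U_C$ creates colimits and the tensor product in $\C$ preserves colimits in each variable, the functor $M\otimes_C(-)\simeq C\otimes(M\otimes -)$ preserves colimits; as $\Comod_C$ is presentable, a right adjoint $\iHom_C(M,-)$ exists. You instead construct the internal hom explicitly as $C\otimes\iHom_\C(U_CM,U_CN)$ and verify the adjunction by hand using full faithfulness of $U_C$ and the hom--tensor adjunction in $\C$. Your approach buys a concrete formula for $\iHom_C$, which the paper only extracts later (inside the proof of \cref{thm:Positselski-duality-idempotent}) via a separate adjunction argument; the paper's approach is shorter and avoids any bookkeeping about underlying objects. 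One small point of presentation: your justification of the first equivalence $\Hom_C(L\otimes_C M,N)\simeq\Hom_\C(U_CL\otimes U_CM,U_CN)$ by ``$L\otimes_C M$ is cofree'' is slightly misdirected, since the cofree--forgetful adjunction has the cofree object in the \emph{second} variable; the clean reason is full faithfulness of $U_C$ together with the observation that $U_C(L\otimes_C M)\simeq C\otimes(U_CL\otimes U_CM)\simeq U_CL\otimes U_CM$, the last step because $U_CL$ is already $C$-colocal. This is implicit in your ``smashing colocalization'' alternative at the end, which is the cleanest way to package your argument.
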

\begin{proof}
    As the cofree-forgetful adjunction creates colimits in the category $\Comod_C$, the functor 
    \[-\otimes_C- \: \Comod_C \times \Comod_C \to \Comod_C\] 
    preserves colimits separately in each variable. In particular, for any comodule $M$ the functor $M\otimes_C (-)$ preserves colimits, hence has a right adjoint $\iHom_C(M,-)$ by the adjoint functor theorem, \cite[5.5.2.9]{lurie_09}. This determines a functor 
    \[\iHom_C(-,-)\: \Comod_C\op\times \Comod_C\to \Comod_C\]
    making $\Comod_C$ a closed symmetric monoidal category.  
\end{proof}

\begin{remark}
    This adjunction, being a hom-tensor adjunction, is also internally adjoint in the sense of \cref{rm:internal-adjunction}. Hence we have an equivalence 
    \[\iHom_C(M\otimes_C N, A) \simeq \iHom_C(M, \iHom_C(N, A))\]
    for all comodules $M, N$ and $A$. 
\end{remark}

We know from \cref{lm:coidempotent-then-comod-monoidal} that the cofree comodule functor $C\otimes(-)\: \C\to \Comod_C$ is a smashing colocalization whenever the coalgebra $C$ is coidempotent. We now wish to have a similar statement for the free contramodule functor $\iHom(C,-)\: \C\to \Contra_C.$ Note that it will not be smashing in general, but otherwise it will have the same features. 

\begin{remark}
    \label{rm:contramodule-structure-on-hom-from-comodule}
    Let $M$ be a $C$-comodule and $V$ any object in $\C$. The structure map $\rho_M\: M\to C\otimes M$ induces a $C$-contramodule structure on the internal hom-object $\iHom(M, V)$, via 
    \[\iHom(C, \iHom(M, V))\simeq \iHom(C\otimes M, V)\overset{-\circ \rho_M}\to \iHom(M, V).\]
\end{remark}

\begin{lemma}
    \label{lm:free-contra-monoidal}
    Let $C$ be a coidempotent cocommutative coalgebra in $\C$. The category of $C$-contramodules $\Contra_C$ inherits the structure of a presentably symmetric monoidal $\infty$-category, making the free contramodule functor a symmetric monoidal localization. In particular, all $C$-contramodules are free. 
\end{lemma}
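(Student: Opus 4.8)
The plan is to realise $\Contra_C$ as a symmetric monoidal reflective localization of $\C$ and then invoke \cite[2.2.1.9]{Lurie_HA}, dually to the proof of \cref{lm:idempotent-then-comod-monoidal}. First I would note that idempotency of $C$ makes the monad $\iHom(C,-)\:\C\to\C$ idempotent: under the identification $\iHom(C,\iHom(C,-))\simeq\iHom(C\otimes C,-)$ from \cref{rm:internal-adjunction} its multiplication is $\iHom(\Delta,-)$, an equivalence since $\Delta$ is. Hence $U^C\:\Contra_C\to\C$ is fully faithful, it identifies $\Contra_C$ with the (accessible, reflective) subcategory of local objects for the localization endofunctor $\iHom(C,-)$, the reflector is the free contramodule functor, and $\Contra_C$ is presentable as an accessible localization of a presentable category.

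By \cite[2.2.1.9]{Lurie_HA} it then suffices to show that the class of local equivalences is stable under tensoring with arbitrary objects of $\C$; since $\C$ is closed symmetric monoidal, I would instead verify the equivalent statement that $\Contra_C\subseteq\C$ is an exponential ideal, i.e. that $\iHom(X,V)$ is a contramodule whenever $X\in\C$ and $V\in\Contra_C$. Granting this, \cite[2.2.1.9]{Lurie_HA} endows $\Contra_C$ with a presentably symmetric monoidal structure --- with tensor product $V\otimes W\mapsto\iHom(C,V\otimes W)$ the reflection of the underlying one, and unit $\iHom(C,\1)$ --- for which the free contramodule functor is symmetric monoidal; it is a localization by construction, which is the assertion of the lemma.

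So the real content is the exponential ideal claim. I would prove it by recalling that $V\in\Contra_C$ precisely when the monad unit $\eta_V\: V\to\iHom(C,V)$, induced by the counit $\epsilon\:C\to\1$, is an equivalence, and then contemplating the natural equivalences
\[
\iHom\bigl(C,\iHom(X,V)\bigr)\;\simeq\;\iHom(C\otimes X,V)\;\simeq\;\iHom(X\otimes C,V)\;\simeq\;\iHom\bigl(X,\iHom(C,V)\bigr)
\]
furnished by \cref{rm:internal-adjunction} and the symmetry of $\otimes$. Under this chain the unit $\eta_{\iHom(X,V)}$ corresponds to $\iHom(X,\eta_V)$ --- both being induced by $\epsilon$, now in the appropriate tensor factor --- and the latter is an equivalence because $\eta_V$ is; therefore $\iHom(X,V)\in\Contra_C$. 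The one step I expect to require genuine care is this last identification: checking that $\eta_{\iHom(X,V)}$ matches $\iHom(X,\eta_V)$ under the displayed equivalences means tracking the unit and counit of the hom--tensor adjunction through the associativity and symmetry constraints, which is routine but not purely formal. The remaining ingredients --- idempotency of the monad, presentability, and the invocation of \cite[2.2.1.9]{Lurie_HA} --- are immediate.
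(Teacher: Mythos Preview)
Your proposal is correct and follows the same overall architecture as the paper's proof: establish that $\iHom(C,-)$ is an idempotent monad so that $U^C$ is fully faithful and the free functor is a localization, then verify the exponential-ideal condition and invoke \cite[2.2.1.9]{Lurie_HA}. The only substantive difference lies in how the exponential-ideal claim is checked. The paper first reduces to the case $V\simeq\iHom(C,A)$ using \cref{prop:contra-over-coseparable-are-free}, rewrites $\iHom(X,\iHom(C,A))\simeq\iHom(C\otimes X,A)$, and then appeals to \cref{rm:contramodule-structure-on-hom-from-comodule} to see this carries a contramodule structure. Your argument instead works directly with an arbitrary contramodule $V$, characterised by the unit $\eta_V$ being an equivalence, and identifies $\eta_{\iHom(X,V)}$ with $\iHom(X,\eta_V)$ through the internal adjunction and symmetry isomorphisms. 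Your route is a little more self-contained---it avoids the detour through free contramodules and the auxiliary \cref{rm:contramodule-structure-on-hom-from-comodule}---at the cost of the naturality check you flag; the paper's route trades that naturality verification for the earlier structural results already in hand. Both land in the same place with comparable effort.
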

\begin{proof}
    The functor $\iHom(C,-)\: \C \to \C$ is an idempotent functor, as we have
    \[\iHom(C, \iHom(C,-))\simeq \iHom(C\otimes C, -)\simeq \iHom(C,-)\]
    by the internal adjunction property together with the coidempotency of $C$. This means that the forgetful functor $U^C\: \Contra_C\to \C$ is fully faithful by \cite[5.2.7.4]{lurie_09}, and hence that the free contramodule functor $\iHom(C,-)\:\C\to \Contra_C$ is a localization. In particular, $\iHom(C, X) \simeq X$ for any $C$-contramodule $X$, meaning that any $C$-contramodule is free. 
    
    In order to determine that it induces a symmetric monoidal structure on $\Contra_C$ we need to check that the free functor is compatible with the monoidal structure in $\C$, in the sense of \cite[2.2.1.7]{Lurie_HA}. More precisely, we need to show that if a map $V \to V'$ in $\C$ is a $\iHom(C,-)$-equivalence, then also $V\otimes W \to V' \otimes W$ is. By \cite[2.12(3)]{nikolaus_2016} this property holds whenever $\iHom(V,X)\in \Contra_C$ for any $X\in \Contra_C$ and $V\in \C$, which we now show. 
    
    As all $C$-contramodules are free, we let $X = \iHom(C, A)$ for some $A\in \C$ and $V\in \C$. By the hom-tensor adjunction we get 
    \[\iHom(V, \iHom(C, A))\simeq \iHom(C\otimes V, A).\]
    As $C\otimes V$ is a $C$-comodule (it is cofree), the object $\iHom(C\otimes V, A)$ is a $C$-contramodule by \cref{rm:contramodule-structure-on-hom-from-comodule}. Hence, $\iHom(C,-)$ is compatible with the monoidal structure. By \cite[2.2.1.9]{lurie_09} this implies that the free contramodule functor $\iHom(C,-)\: \C\to \Contra_C$ can be given the structure of a symmetric monoidal functor. 
    
    Finally, as $\Contra_C$ is a localization of a presentably symmetric monoidal category by an accessible functor, it is also presentably symmetric monoidal.  
\end{proof}

\begin{remark}
    To be explicit, the symmetric monoidal structure on $\Contra_C$ is given by $\iHom(C, X\otimes Y)$ for two contramodules $X$ and $Y$, where $\otimes$ is the tensor product in $\C$. In other words, it is the free contramodule on the underlying product. 
\end{remark}

\begin{remark}
    The free contramodule functor is, as mentioned, not smashing in general. This failure is recorded precisely in the symmetric monoidal structure not being given just as the underlying tensor-product, but rather as the free comodule on it. There is a special case, however, where this problem goes away. In the case when the coidempotent coalgebra $C$ is \emph{dualizable}, then the functor $\iHom(C,-)$ is given by $C^\vee \otimes (-)$, where $C^\vee$ is the linear dual of $C$. In this case, $C^\vee$ is an idempotent algebra in $\C$, and there is a symmetric monoidal equivalence between the category of $C$-contramodules and the category of $C^\vee$-modules. 
\end{remark}






We can now deduce our main result, namely that Positselski duality is a symmetric monoidal equivalence for coidempotent coalgebras. 

\begin{theorem}[{\cref{introthm:A}}]
    \label{thm:Positselski-duality-coidempotent}
    Let $\C$ be a presentably symmetric monoidal category and $C\in \C$ a coidempotent cocommutative coalgebra. In this situation there are mutually inverse symmetric monoidal functors
    \begin{center}
        \begin{tikzcd}
            \ComodC(\C) \arrow[rr, yshift=2pt, "{\iHom(C, -)}"] && \ContraC(\C) \arrow[ll, yshift=-2pt, "C\otimes(-)"]
        \end{tikzcd}
    \end{center}
    given on the underlying objects by the free contramodule functor and the cofree comodule functor respectively. 
\end{theorem}
\begin{proof}
    By \cref{lm:coidempotent-then-comod-monoidal} every $C$-comodule is cofree. Similarly, by \cref{lm:free-contra-monoidal} every $C$-contramodule is free. Hence we check an equivalence between these. 
    
    Let $A$ be any object in $\C$. Denote by $C\otimes A$ the corresponding cofree comodule and $\iHom(C, A)$ the corresponding free contramodule. A simple adjunction argument, using both the cofree-forgetful adjunction and the hom-tensor adjunctions in $\C$ and $\Comod_C$, shows that there is an equivalence 
    \[\iHom_C(M, C\otimes A)\simeq C\otimes \iHom(U_C M, A)\]
    for any comodule $M$. In other words, the internal comodule hom is determined by the underlying internal hom in $\C$. For $M = C$ we get
    \[C\otimes \iHom(C, A)\simeq \iHom_C(C, C\otimes A)\]
    which is equivalent to $C\otimes A$ as $C$ is the unit in $\Comod_C$. 

    For the other direction we wish to show that 
    \[\iHom(C, C\otimes A) \simeq \iHom(C, A).\] 
    We do this by showing that the cofree-forgetful functor is an internal adjunction, in the sense of \cref{rm:internal-adjunction}. 

    Let $B$ be an arbitrary object in $\C$, and recall our notation $\Hom(-,-)$ for the mapping space in $\C$. By the hom-tensor adjunction in $\C$ we have 
    \[\Hom(B, \iHom(C, C\otimes A)) \simeq \Hom(C\otimes B, C\otimes A).\]
    Both of these are in the image of the forgetful functor $U_C\:\Comod_C\to \C.$ As $U_C$ is fully faithful whenever $C$ is coidempotent, we get 
    \[\Hom(C\otimes B, C\otimes A) \simeq \Hom_C(C\otimes B, C\otimes A),\]
    where we recall that the latter denotes maps of comodules. By the cofree-forgetful adjunction we have 
    \[\Hom_C(C\otimes B, C\otimes A) \simeq \Hom(C\otimes B, A),\]
    which by the hom-tensor adjunction in $\C$ finally gives 
    \[\Hom(C\otimes B, A) \simeq \Hom(B, \iHom(C, C\otimes A)).\]
    Summarizing the equivalences we have 
    \[\Hom(B, \iHom(C, C\otimes A))\simeq \Hom(B, \iHom(C, A)),\] 
    which by a Yoneda argument implies that there is an equivalence of internal hom-objects $\iHom(C, C\otimes A)\simeq \iHom(C, A)$. 



    We know by \cref{lm:coidempotent-then-comod-monoidal} and \cref{lm:free-contra-monoidal} that the cofree comodule functor and the free contramodule functor are both symmetric monoidal. By the arguments above, we know that the equivalence $\Comod_C\simeq \Contra_C$ is given by the compositions 
    \begin{center}
        \begin{tikzcd}
            \Comod_C 
            \arrow[rr, yshift = 2pt, "U_C"] 
            && 
            \C 
            \arrow[ll, yshift = -2pt, "C\otimes -"] 
            \arrow[rr, yshift = 2pt, "{\iHom(C, -)}"] 
            && 
            \Contra_C 
            \arrow[ll, yshift = -2pt, "U^C"]
        \end{tikzcd}
    \end{center}
    The composition from left to right is an op-lax symmetric monoidal functor, and the composition from right to left is a lax symmetric monoidal functor. Since they are both equivalences they are necessarily also symmetric monoidal. 
\end{proof}

\begin{remark}
    \label{rm:holds-generally-for-separable}
    We do believe that the above result holds more generally for all coseparable cocommutative coalgebras. These are coalgebras where the comultiplication admits a section, rather than being an equivalence as in the case of coidempotent coalgebras. This coseparable generalization of \cref{thm:Positselski-duality-coidempotent} does hold in the $1$-categorical situation. However, it will in general not be a monoidal equivalence, due to the lack of monoidal structures. 
\end{remark}



\subsection{Local duality}
\label{ssec:local-duality}

Our main interest for constructing an $\infty$-categorical version of Positselski duality is related to local duality, in the sense of \cite{hovey-palmiery-strickland_97} and \cite{barthel-heard-valenzuela_2018}. In this section we use \cref{thm:Positselski-duality-coidempotent} to construct an alternative proof of \cite[2.21]{barthel-heard-valenzuela_2018}. We first recall the construction of local duality.

Let $(\C, \otimes, \1)$ be a presentably symmetric monoidal $\infty$-category. The tensor product $\otimes$ preserves filtered colimits separately in each variable, which by the adjoint functor theorem (\cite[5.5.2.9]{lurie_09}) means that the functor $A\otimes (-)$ has a right adjoint $\iHom(A, -)$, making $\C$ a closed symmetric monoidal category. From this internal hom-object we get a functor 
\[(-)^\vee = \iHom(-, \1)\: \C\op \to \C,\] 
which we call \emph{the linear dual}. 

\begin{definition}
    An object $A\in \C$ is \emph{compact} if the functor $\Hom(A, -)$ preserves filtered colimits, and it is \emph{dualizable} if the natural map $A^\vee \otimes B \to \Hom(A,B)$ is an equivalence for all $B\in \C$. 
\end{definition}

The category $\C$ is said to be \emph{compactly generated} if the smallest localizing subcategory containing the compact objects is $\C$. 

\begin{definition}
    A \emph{local duality context} is a pair $(\C, \K)$, where $\C$ is a presentably symmetric monoidal stable $\infty$-category compactly generated by dualizable objects, and $\K\subseteq \C$ is a set of compact objects. 
\end{definition}

\begin{construction}
    Let $(\C, \K)$ be a local duality context. We denote the localizing ideal generated by $\K$ by $\C\Ktors := \Loc_\C^\otimes(\K)$. The \emph{right orthogonal complement} of $\C\Ktors$, in other words those objects $A\in \C$ such that $\Hom(X, A) \simeq 0$ for all $X\in \C\Ktors$ is denoted by $\C\Kloc$. By \cite[2.17]{barthel-heard-valenzuela_2018} this category is also a compactly generated localizing subcategory of $\C$. Lastly, we define the category $\C\Kcomp$ to be the right orthogonal complement to $\C\Kloc$. 

    Now, the fully faithful inclusion $i_{\K-\mathrm{tors}}\:\C\Ktors\hookrightarrow \C$ has a right adjoint functor $\Gamma \:\C\to \C\Ktors$---again by the adjoint functor theorem. This means, in particular, that $\Gamma$ is a colocalization. Similarly, the fully faithful inclusion $i_{\K-\mathrm{loc}}\:\C\Kloc\hookrightarrow \C$ has a left adjoint functor $L\: \C\to \C\Kloc$, and the fully faithful inclusion $i_{\K-\mathrm{comp}}\:\C\Kcomp\hookrightarrow \C$ has a left adjoint functor $\Lambda\:\C\to \C\Kcomp$, which are then both localizations by definition. 
\end{construction}

\begin{remark}
    Note that in the paper \cite{barthel-heard-valenzuela_2018} referenced above, they use the term \emph{left orthogonal complement} instead of right. Both of these are used throughout the literature, but we decided on using \emph{right}, as it felt more natural to the author. 
\end{remark}

The following is usually referred to as the local duality theorem, see \cite[3.3.5]{hovey-palmiery-strickland_97} or \cite[2.21]{barthel-heard-valenzuela_2018}. 

\begin{theorem}
    \label{thm:local-duality-co-contra}
    For any local duality context $(\C, \K)$, 
    \begin{enumerate}
        \item the functor $L$ is a smashing localization,
        \item the functor $\Gamma$ is a smashing colocalization, 
        \item there are equivalences of functors $\Gamma \Lambda \simeq \Gamma$ and $\Lambda\Gamma \simeq \Lambda$,  
        \item the functors $\Lambda\circ i_{\K-\mathrm{tors}}$ and $\Gamma \circ i_{\K-\mathrm{comp}}$ are mutually inverse equivalences, and 
        \item the functors $(\Gamma, \Lambda)$, viewed as endofunctors on $\C$, form an adjoint pair. 
    \end{enumerate} 
    In particular, there are equivalences
    \[\C\Ktors\simeq \C\Kcomp\]
    of symmetric monoidal stable $\infty$-categories. 
\end{theorem}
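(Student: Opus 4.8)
The statement has four enumerated parts plus a concluding chain of equivalences, and the strategy is to reduce everything to the theory of idempotent coalgebras developed in the previous subsection. The starting observation is that, since $\C^{\K-tors} = \Loc_\C^\otimes(\K)$ is a localizing tensor ideal generated by a set of compact (hence dualizable) objects in a rigidly compactly generated $\C$, the colocalization $\Gamma$ is \emph{smashing}: this is the standard fact from \cite{hovey-palmiery-strickland_97} (or \cite[2.21]{barthel-heard-valenzuela_2018}) that the right adjoint to the inclusion of a compactly generated localizing ideal is given by $\Gamma(-) \simeq \Gamma\1 \otimes (-)$. Dually, $L$ is the corresponding smashing localization with $L(-) \simeq L\1\otimes(-)$, where there is a fiber sequence $\Gamma\1 \to \1 \to L\1$ of the unit. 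This establishes parts (1) and (2). Moreover $i\Gamma\1$ is an idempotent $\E_\infty$-algebra's "complement": concretely $L\1$ is an idempotent $\E_\infty$-algebra (the Bousfield localization of the unit), so its fiber $\Gamma\1 =: C$ is an idempotent $\E_\infty$-coalgebra in $\C$ — this is exactly the hypothesis needed to invoke \cref{thm:Positselski-duality-idempotent}.

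\emph{Identifying torsion objects with comodules.} The next step is to show $\C^{\K-tors} \simeq \Comod_{C}(\C)$ where $C = \Gamma\1$. For this I would use the dual monoidal Barr--Beck theorem, \cref{thm:dual-monoidal-BB}, applied to the adjunction $i\: \C^{\K-tors} \rightleftarrows \C \: (\text{right adjoint})$ — wait, that has the wrong handedness, so instead I would apply it to the localization: the smashing colocalization gives a fully faithful right adjoint whose left adjoint $\Gamma$ is, up to the identification of $\C^{\K-tors}$ as a smashing subcategory, symmetric monoidal onto its image. More precisely, $\C^{\K-tors}$ is itself presentably symmetric monoidal with unit $C = \Gamma\1$ and the inclusion functor $i$ is \emph{oplax} monoidal while $\Gamma$ is symmetric monoidal; the adjunction $\Gamma \dashv i$ then has symmetric monoidal right adjoint $i$, is comonadic (the counit of a colocalization is an equivalence, so $\Gamma$ is conservative on the subcategory, and $i$ preserves all limits), and satisfies the comonadic projection formula because $\Gamma$ is smashing. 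Hence \cref{thm:dual-monoidal-BB} yields $\C^{\K-tors} \simeq \Comod_{\Gamma i(\1_{\C^{\K-tors}})}(\C) = \Comod_{\Gamma\1}(\C) = \Comod_C(\C)$, and this equivalence is symmetric monoidal by \cref{rm:monoidal-structure-comodules}.

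\emph{Identifying complete objects with contramodules, and assembling.} Dually, I would identify $\C^{\K-comp} \simeq \Contra_{C}(\C)$. Here the relevant adjunction is $i_{\K-comp} \: \C^{\K-comp} \rightleftarrows \C \: \Lambda$, and one wants a \emph{monadic} recognition: $\Lambda$ is the Bousfield localization onto the completion, which can be written as $\Lambda(-) \simeq \iHom(C, -)$ — indeed $\C^{\K-comp}$ is the right orthogonal to $\C^{\K-loc} = \Mod_{L\1}(\C)$, and the completion functor is classically $\iHom(\Gamma\1, -)$ (this is the content of the fact, used in \cite{barthel-heard-valenzuela_2018}, that the fracture square identifies $\Lambda\1$ with $\iHom(\Gamma\1,\1)$-style constructions; concretely $\iHom(C,-)$ is the idempotent monad whose local objects are the $C$-complete ones). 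Then $\C^{\K-comp} \simeq \LMod_{\iHom(C,-)}(\C) = \Contra_C(\C)$ by the very definition of contramodules. Finally, parts (3) and (4): with both identifications in hand, \cref{thm:Positselski-duality-idempotent} supplies the mutually inverse symmetric monoidal equivalences $\Comod_C(\C) \simeq \Contra_C(\C)$ given by $\iHom(C,-)$ and $C\otimes(-)$; transporting these across the two Barr--Beck identifications, the functor $\iHom(C,-)\: \Comod_C \to \Contra_C$ becomes precisely $\Lambda \circ i_{\K-tors}$ (free contramodule = complete object built from a torsion object) and $C\otimes(-)$ becomes $\Gamma\circ i_{\K-comp}$, giving (3). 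Part (4) — that $(\Gamma, \Lambda)$ form an adjoint pair as endofunctors of $\C$ — then follows because $\Gamma = i_{\K-tors}\circ(C\otimes -)\circ$(the equivalence)$\circ \Lambda$ up to the identifications, or more directly because $\Gamma(-) = C\otimes(-)$ and $\Lambda(-) = \iHom(C,-)$ and these are an adjoint comonad-monad pair by the hom-tensor adjunction in $\C$; the unit/counit of that adjunction restrict appropriately.

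\emph{Main obstacle.} The routine parts are the smashing statements (1),(2) and the formal transport of structure in (3),(4). The genuine work is verifying that the two Barr--Beck-type inputs actually apply: for the torsion side, that the colocalization adjunction satisfies the comonadic projection formula (which is essentially equivalent to $\Gamma$ being smashing, so this is where that hypothesis is spent), and for the complete side, correctly identifying the completion functor $\Lambda$ with the contramodule monad $\iHom(C,-)$ rather than merely with \emph{some} idempotent monad — establishing that $\C^{\K-comp}$ coincides with $\LMod_{\iHom(C,-)}(\C)$ requires knowing that the local objects of the monad $\iHom(C,-)$ are exactly the right orthogonal to $\C^{\K-loc}$, which is the one place where a small genuine argument (comparing two localizations with the same local objects) is needed. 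I expect the paper handles this via the observation that $\C^{\K-loc} = \Mod_{L\1}(\C)$ and that $X$ is $C$-complete iff $\iHom(L\1, X) \simeq 0$ iff the canonical map $X \to \iHom(C, X)$ is an equivalence, which falls out of the fiber sequence $C \to \1 \to L\1$ after applying $\iHom(-,X)$.
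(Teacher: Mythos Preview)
Your proposal follows essentially the same route as the paper: deduce (1) and (2) from \cite{hovey-palmiery-strickland_97}, recognize $\Gamma\1$ as an idempotent $\E_\infty$-coalgebra, invoke \cref{thm:dual-monoidal-BB} for $\C^{\K-tors}\simeq\Comod_{\Gamma\1}$, identify $\Lambda\simeq\iHom(\Gamma\1,-)$ to get $\C^{\K-comp}\simeq\Contra_{\Gamma\1}$, and then apply \cref{thm:Positselski-duality-idempotent} for (3) and the hom--tensor adjunction for (4). Two minor points: the adjunction is $i\dashv\Gamma$ (not $\Gamma\dashv i$), so in the setup of \cref{thm:dual-monoidal-BB} it is $\Gamma$ that plays the role of the symmetric monoidal right adjoint and the fully faithful inclusion $i$ that of the conservative left adjoint; and the paper obtains $\Lambda Y\simeq\iHom(\Gamma\1,Y)$ by citing \cite[2.2]{barthel-heard-valenzuela_2018} directly (the chain $\iHom(\Gamma X,Y)\simeq\iHom(\Lambda X,\Lambda Y)\simeq\iHom(X,\Lambda Y)$ evaluated at $X=\1$) rather than via your fiber-sequence argument, though yours works just as well.
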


\begin{remark}
    The result will essentially follow from recognizing $(\Gamma, \Lambda)$, viewed as endofunctors on $\C$, as the adjoint comonad-monad pair $C\otimes (-)\dashv \iHom(C,-)$ for a certain coidempotent cocommutative coalgebra $C$, and then applying \cref{thm:Positselski-duality-coidempotent}. 
\end{remark}

\begin{proof}
    Let us first consider the comodule side of the story. By \cite[3.3.3]{hovey-palmiery-strickland_97} the functor $L$ is a smashing localization, as it is a finite localization away from $\K$. By construction the functor $\Gamma$ is determined by the kernel of the localization $A\to LA$, hence it is a smashing colocalization. This proves part $(1)$ and $(2)$. 

    As $\Gamma$ is smashing it is given by $\Gamma A \simeq \Gamma \1 \otimes A$, and as $\C\Ktors$ is an ideal, it inherits a symmetric monoidal structure from $\C$, making $\Gamma$ a symmetric monoidal functor. In particular, the object $\Gamma \1$ is the unit in $\C\Ktors$. The unit in a compactly generated symmetric monoidal stable $\infty$-category is both a commutative algebra and a cocommutative coalgebra. The inclusion $i_{\K-\mathrm{tors}}\: \C\Ktors\hookrightarrow \C$ is op-lax monoidal, as it is the left adjoint of a symmetric monoidal functor, meaning that it preserves coalgebras. In particular, $\Gamma\1$ treated as an object in $\C$ is a cocommutative coalgebra. Since $\Gamma$ is a smashing colocalization $\Gamma \1$ is a coidempotent coalgebra. By \cref{thm:dual-monoidal-BB} there is then an equivalence $\C\Ktors\simeq \Comod_{\Gamma\1}$, meaning that $\Gamma$ is identified with the cofree $\Gamma \1$-comodule functor. 

    Let us now turn to the contramodule side of the story. By defintion, there is a cofiber sequence $\Gamma \1 \to \1 \to L\1$, which induces a cofiber sequence 
    \[\iHom(L\1, A) \to \iHom(\1, A)\simeq A \to \iHom(\Gamma \1, A)\]
    for any $A\in \C$. Letting $X \in \C\Kcomp$ we have, by defintion, that $\Hom(B,X) \simeq 0$ for any $B \in \C\Kloc$. We want to show that also $\iHom(B, X) \simeq 0$ for all $B\in \C\Kloc$. By a Yoneda argument it suffices to show 
    \[\Hom(U, \iHom(B, X))\simeq \Hom(U, 0) \simeq 0\]
    for all $U \in \C$. By the hom-tensor adjunction we have 
    \[\Hom(U, \iHom(B, X)) \simeq \Hom(U\otimes B, X).\]
    As $L$ is smashing, $\C\Kloc$ is a localizing $\otimes$-ideal, implying that $U\otimes B \in \C\Kloc$ for all $U\in \C$ and $B \in \C\Kloc$. In particular, by the defining property of $X\in \C\Kcomp$, we have $\Hom(U\otimes B, X) \simeq 0$, which then finally implies that also the internal hom $\iHom(B, X) \simeq 0$ for all $B \in \C\Kloc$.   
    
    Now, letting $B = L\1$ gives an equivalence $\iHom(L\1, X)\simeq 0$, meaning that $X \simeq \iHom(\Gamma\1, X)$ by the above cofiber sequence. Hence, any object in $\C\Kcomp$ is a free contramodule over the idempotent cocommutative coalgebra $\Gamma \1$, giving an equivalence $\C\Kcomp \simeq \Contra_{\Gamma\1}$. Furthermore, as $\Lambda$ is a localization by definition, we have an equivalence $\Lambda X \simeq \iHom(\Gamma \1, X)$ for all $X\in \C\Kcomp$, which by idempotency implies that the functors themselves are equivalent. 
    
    To summarize: Any object $M$ in $\C\Ktors$ is a cofree $\Gamma\1$-comodule, and the functor $\Gamma$ coincides with the cofree functor; any object $X \in \C\Kcomp$ is a free $\Gamma\1$-contramodule, and the functor $\Lambda$ coincides with the free functor. This proves part $(5)$. 
    
    By \cref{thm:Positselski-duality-coidempotent} we then get equivalences of categories 
    \[\C\Ktors\simeq \Comod_{\Gamma \1}(\C) \simeq \Contra_{\Gamma \1}(\C)\simeq \C\Kcomp\]
    given by the mutually inverse equivalences 
    \begin{center}
        \begin{tikzcd}
            \Comod_{\Gamma\1} \arrow[rr, yshift=2pt, "{\iHom(\Gamma\1,-)}"] && \Contra_{\Gamma \1} \arrow[ll, yshift=-2pt, "\Gamma\1 \otimes (-)"]
        \end{tikzcd}
    \end{center}
    proving part $(4)$.
    
    Finally, part $(3)$. Let $A\in \C$. The equivalences $\Gamma \Lambda A\simeq \Gamma A$ and $\Lambda\Gamma A \simeq \Lambda A$ follow from the equivalences 
    \[\Gamma \1 \otimes \iHom(\Gamma\1, A)\simeq \iHom_{\Gamma\1}(\Gamma\1, \Gamma\1\otimes A) \simeq \Gamma\1 \otimes A\]
    and 
    \[\iHom(\Gamma\1, \Gamma\1\otimes A)\simeq \iHom(\Gamma \1, A),\] 
    as we showed in the proof of \cref{thm:Positselski-duality-coidempotent}. 
\end{proof}

\begin{remark}
    \label{rm:contramodular-BB}
    The author feels that the equivalence $\C\Kcomp\simeq \Contra_{\Gamma\1}$ should be a formal consequence of a ``contramodular'' Barr--Beck theorem, but such a result has so far escaped our grasp. 
\end{remark}


The motivation for proving local duality in this setup was to have the following visually beautiful description of local duality.

\begin{center}
    \begin{tikzcd}[sep = large]
        & 
        \Mod_{L\1} 
        \arrow[d, xshift=-2pt] 
        \arrow[rdd, dotted, bend left] 
        & \\
        & 
        \C \arrow[u, xshift=2pt] 
        \arrow[ld, yshift=-2pt, xshift=1pt] 
        \arrow[rd, yshift=2pt, xshift=1pt]                  
        & \\
        \Comod_{\Gamma \1} 
        \arrow[ru, yshift=2pt, xshift=-1pt] 
        \arrow[rr, yshift=2pt, "\simeq"] 
        \arrow[ruu, dotted, bend left] 
        &                                                     
        & \Contra_{\Gamma\1} 
        \arrow[lu, yshift=-2pt, xshift=-1pt] 
        \arrow[ll, yshift=-2pt]
    \end{tikzcd}
\end{center}

Here the dotted arrows correspond to taking the right-orthogonal complement. 

\begin{remark}
    In the local duality theorem there is another naturally appearing functor---briefly encountered in the proof of \cref{thm:Positselski-duality-coidempotent}---which is the right adjoint to the inclusion $\C\Kloc\hookrightarrow \C$, given by $V \simeq \iHom(L\1, -)$. As discussed in \cref{ex:algebra-module-monad} it is a comonadic functor, and its category of comodules is equivalent to $\Mod_{L\1}$. We can think of the objects in $\Comod_{\iHom(L\1,-)}$ as ``co-contramodules''. Adding these to the picture gives 
    \begin{center}
        \begin{tikzcd}[sep = large]
            \Mod_{L\1} 
            \arrow[rd, yshift=-2pt, xshift=-1pt] 
            \arrow[rr, "\simeq"] 
            && 
            \Cocontra_{L\1} 
            \arrow[ld, yshift=2pt, xshift=-1pt] 
            \arrow[dd, dotted] 
            \\
            & 
            \C 
            \arrow[lu, yshift=2pt, xshift=1pt]
            \arrow[ru, yshift=-2pt, xshift=1pt] 
            \arrow[ld, yshift=-2pt, xshift=1pt] 
            \arrow[rd, yshift=2pt, xshift=1pt]   
            &
            \\
            \Comod_{\Gamma \1} 
            \arrow[ru, yshift=2pt, xshift=-1pt] 
            \arrow[rr, "\simeq"]
            \arrow[uu, dotted] 
            && 
            \Contra_{\Gamma\1} 
            \arrow[lu, yshift=-2pt, xshift=-1pt]                
            \end{tikzcd}
    \end{center}
    which also makes this story enticingly connected to $4$-periodic semi-orthogonal decompositions and spherical adjunctions---see \cite[Section 2.5]{dyckerhoff-kaparanov-schechtman-soibelman_2024}. 
\end{remark}

\subsection{Contramodules over topological algebras}
\label{sec:pro-algebras}

As mentioned before, it is somewhat unsatisfactory that the local duality categories $\C\Kloc \simeq \Mod_{L\1}$ and $\C\Ktors \simeq \Comod_{\Gamma\1}$ are based on their respective units, while $\C\Kcomp \simeq \Contra_{\Gamma\1}$ depends on the unit in its dual category. We now present a simple way to deal with this conceptual issue. 

\begin{definition}
    Let $\C$ be a symmetric monoidal $\infty$-category generated by dualizable objects. A commutative algebra $R \in \C$ is called \emph{pro-dualizable} if it is the materialization of a commutative algebra $\overline{R}$ in the pro-category of $\C\dual$, i.e., $\overline{R}\in \CAlg(\Pro(\C\dual))$. In other words, there is a pro-tower 
    \[\overline{R} = (\cdots \to R_2 \to R_1 \to R_0)\]
    of dualizable objects, and an equivalence of commutative algebras $R\simeq \lim_k R_k$. 
\end{definition}

\begin{remark}
    Details on pro-categories in the $\infty$-categorical setting can be found in \cite[A.8.1]{lurie_SAG}. 
\end{remark}

This extra structure allows us to define a second notion of contramodules. 

\begin{definition}
    Let $R \simeq \lim_k R_k$ be a pro-dualizable commutative algebra. The functor 
    \[\lim_k (R_k \otimes -)\: \C \to \C\]
    is a monad, and we define the category of \emph{contramodules over $R$} to be its category of modules. In other words, $\Contra_R := \Mod_{\lim_k (R_k \otimes -)}(\C)$.
\end{definition}

\begin{remark}
    Defining contramodules over objects with pro-structures is not a new idea, and Positselski has sucessfully developed a theory for these in the classical setting. Over a field $F$ the linear dual functor $(-)^\vee := \iHom(-, F)$ gives an equivalence between the opposite category of infinite-dimensional $F$-vector spaces and linearly compact vector spaces -- also called pro-finite-dimensional vector spaces. Any coalgebra $C\in \Vect_F$ is the union of its finite-dimensional sub-coalgebras, hence any algebra $A$ in the category of pro-finite-dimensional vector spaces is a projective limit of finite-dimensional algebras. Positselski has defined a notion of contramodules over such $F$-algebras, using a certain infinite summation monad on the category of sets. 
    
    It is not clear to the author how Positselski's ideas could be more directly lifted to the $\infty$-categorical setting, but Positselski's setup is still the inspiration for our definition of contramodules over pro-dualizable algebras, due to the next result. 
\end{remark}


\begin{theorem}[\cref{introthm:C}]
    \label{thm:contra-is-contra}
    Let $\C$ be a symmetric monoidal $\infty$-category that is generated by dualizable objects, and let $C\in \C$ be a cocommutative coalgebra. The linear dual $\C^\vee := \iHom(C,\1)$ is a pro-dualizable commutative algebra, and there is an equivalence of $\infty$-categories $\Contra_C \simeq \Contra_{C^\vee}$. 
\end{theorem}
\begin{proof}
    The equivalence $\C\simeq \Ind(\C\dual)$ induces an equivalence on their respective categories of coalgebras, hence there is a choice of presentation $\colim_k C_k$ of the coalgebra $C$ that is a cocommutative coalgebra in $\Ind(\C\dual)$. 
    
    As $\C\dual$ is self-dual, together with the equivalence $\Ind(\C^{\mathrm{dual, op}}) \simeq \Pro(\C\dual)\op$ -- see \cite[A.8.1.2]{lurie_SAG} -- we obtain an equivalence 
    \[\cCAlg(\C)\op \simeq \CAlg(\Pro(\C\dual)),\]
    given by the linear dual $\iHom(-,\1)$. In particular, the linear dual of a cocommutative coalgebra $C$ is always a pro-dualizable commutative algebra. 

    The wanted equivalence now follows from the equivalence of functors 
    \[\iHom(C,-) \simeq \iHom(\colim_k C_k, -) \simeq \lim_k \iHom(C_k,-) \simeq \lim_k(C_k^\vee \otimes -),\]
    which in turn produces equivalent Eilenberg--Moore categories. 
\end{proof}

This gives us our wanted description of $\C\Kcomp$ as the category of contramodules over its unit $\Lambda \1$. 

\begin{corollary}
    \label{cor:contra_unit_complete}
    For any local duality context $(\C, \K)$, there is an equivalence of $\infty$-categories $\C\Kcomp \simeq \Contra_{\Lambda \1}(\C)$. 
\end{corollary}

\subsection{Examples}

Our main interest in \cref{thm:local-duality-co-contra} comes from chromatic homotopy theory and derived completion of rings. We will not present comprehensive introductions to these topics here. The interested reader is referred to \cite[text]{barthel-beaudry_19} for details on the former, and \cite[text]{barthel-heard-valenzuela_2020} for the latter. 

\subsubsection*{Chromatic homotopy theory}

The category of spectra, $\Sp$, is the initial presentably symmetric monoidal stable $\infty$-category. Fixing a prime $p$, one can describe chromatic homotopy theory as the study of $p$-local spectra together with a \emph{chromatic filtration}, coming from the height filtration of formal groups. In such a filtration there is a filtration component corresponding to each natural number $n$, which we will refer to as the $n$-th component. There are, at least, two different chromatic filtrations on $\Sp$, and their conjectural equivalence was recently disproven in \cite{burklund-hahn-levy-schlank_23}. For simplicity we will distinguish these two by referring to them as the \emph{compact filtration} and the \emph{finite filtration}. This latter is a bit misleading, as it is not a finite filtration---the word finite corresponds to a certain finite spectrum. The $n$-th filtration component in the compact filtration is controlled by the Morava $K$-theory spectrum $K(n)$, and the $n$-th filtration component in the finite filtration is controlled by the telescope spectrum $T(n)$. 

We denote the $n$-th component of the compact filtration by $\Sp_n$ and the $n$-th component of the finite filtration by $\Sp_n^f$. The different components are related by smashing localization functors $L_{n-1}\:\Sp_n \to \Sp_{n-1}$ and $L_{n-1}^f\: \Sp_n^f \to \Sp_{n-1}^f$ respectively. 

In the light of local duality, the category $\Sp_{n-1}$ is the category of local objects in $\Sp_n$ for a compact object $L_n F(n) \in \Sp_n$. The torsion objects with respect to $L_n F(n)$ is the category of \emph{monochromatic spectra}, denoted $\M_n$ and the category of complete objects are the $K(n)$-local spectra, $\Sp_\Kn$. For more details on monochromatic and $K(n)$-local spectra, see \cite{hovey-strickland_99}, and for the relationship to local duality, see \cite[Section 6.2]{barthel-heard-valenzuela_2018}. 

\begin{proposition}
    For any prime $p$ and non-negative integer $n$, there are equivalences 
    \[\M_n \simeq \Comod_{M_n\S}(\Sp_n) \text{ and } \SpKn\simeq \Contra_{M_n\S}(\Sp_n)\] 
    of symmetric monoidal stable $\infty$-categories. 
\end{proposition}
\begin{proof}
    This follows directly from \cref{thm:local-duality-co-contra}, as $L_n F(n)$ is compact in $\Sp_n$, making the pair $(\Sp_n, L_n F(n))$ a local duality context. 
\end{proof}

By \cref{cor:contra_unit_complete} we also have a description of $\SpKn$ as contramodules over the $K(n)$-local sphere $L_\Kn \S$.

\begin{corollary}
    \label{cor:Kn-local-spectra-as-contramodules}
    There is an equivalence of symmetric monoidal stable $\infty$-categories
    \[\SpKn \simeq \Contra_{L_\Kn \S}(\Sp_n).\]
\end{corollary}

\begin{remark}
    By \cite[6.3]{davis-lawson_2014} $L_\Kn\S$ has a commutative pro-dualizable presentation in terms of Moore spectra, and one can show that this gives an equivalent category of contramodules to the one determined by $M_n\S$. 
\end{remark} 

We also have a similar description of the objects coming from the finite chromatic filtration. 

\begin{proposition}
    For any prime $p$ and non-negative integer $n$, there are equivalences 
    \[\M_n^f \simeq \Comod_{M_n^f\S}(\Sp_n^f) \text{ and }\Sp_{T(n)}\simeq \Contra_{M_n^f\S}(\Sp_n^f)\] 
    of symmetric monoidal stable $\infty$-categories. 
\end{proposition}
\begin{proof}
    As the functor $M_n^f\: \Sp_n^f \to \M_n^f$ is a smashing colocalization, \cref{thm:dual-monoidal-BB} gives an equivalence $\M_n^f \simeq \Comod_{M_n^f \S}(\Sp_n^f).$ As there is an equivalence $\M_n^f \simeq \Sp_{T(n)}$ the claim of the result is then a formal consequence of \cref{thm:Positselski-duality-coidempotent}.
\end{proof}

As above, this gives by \cref{cor:contra_unit_complete} the following description.

\begin{corollary}
    \label{cor:Tn-local-spectra-as-contramodules}
    There is an equivalence of symmetric monoidal stable $\infty$-categories
    \[\Sp_{T(n)} \simeq \Contra_{L_{T(n)} \S}(\Sp_n).\]
\end{corollary}

\subsubsection*{Derived completion}
\label{ch2:ssec:derived-completion}

Let $R$ be a commutative noetherian ring and $I\subseteq R$ an ideal generated by a finite regular sequence. The $I$-adic completion functor 
\[C^I\: \Mod_R \to \Mod_R,\] 
defined by $C^I(M)=\lim_k M/I^k$ is neither a left nor right exact functor. However, by \cite[5.1]{greenlees-may_92} the higher right derived functors vanish. We denote the higher left derived functors of $C^I$ by $L^I_i$. An $R$-module $M$ is said to be \emph{$I$-adically complete} if the natural map $M\to C^I (M)$ is an isomorphism. It is said to be \emph{$L$-complete} if the natural map $M\to L_0^I(M)$ is an isomorphism. 

The map $M\to C^I(M)$ factors through $L_0^I(M)$, and the map $L_0^I(M)\to C^I(M)$ is always an epimorphism, but usually not an isomorphism. The full subcategory consisting of the $L$-complete modules form an abelian category $\Mod_R\Icomp$. The full subcategory of $I$-adically complete modules, $\Mod_R^\wedge$ is usually not abelian. 

The $I$-power torsion submodule of an $R$-module $M$ is defined to be 
\[T_I(M) := \{m \in M \mid I^k m = 0 \text{ for some } k\geq 0\}.\]
We say an $R$-module $M$ is $I$-power torsion if the natural map $T_I(M) \to M$ is an isomorphism. The full subcategory consisting of $I$-power torsion $R$-modules forms a Grothendieck abelian category, denoted $\Mod_R\Itors$. 

The object $R/I$ is compact in $\Der(R)$, which is a rigidly compactly generated symmetric monoidal stable $\infty$-category. Hence, $(\Der(R), R/I)$ is a local duality context. The associated local duality functors $\Gamma$ and $\Lambda$ coming can by \cite[3.16]{barthel-heard-valenzuela_2018} be identified with the total right derived functor $\mathbb{R}T_I$ and the total left derived functor $\mathbb{L}C^I$ respectively. By \cref{thm:Positselski-duality-coidempotent} we know that these are the cofree comodule functor and the free contramodule functor, hence we can conclude with the following. 

\begin{proposition}
    There are symmetric monoidal equivalences
    \[\Der(R)^{R/I-tors}\simeq \Comod_{\mathbb{R}T_I(R)} \text{ and } \Der(R)^{R/I-\mathrm{comp}}\simeq \Contra_{\mathbb{R}T_I(R)}.\]
\end{proposition}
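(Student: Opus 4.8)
The plan is to assemble the result from the pieces already established in the paper, so the proof will largely be a matter of chaining equivalences rather than new construction. First I would invoke the fact that $(\Der(R), R/I)$ is a local duality context, which follows because $R/I$ is a compact object in the rigidly compactly generated symmetric monoidal stable $\infty$-category $\Der(R)$. This lets me apply \cref{thm:local-duality-co-contra}, giving the symmetric monoidal equivalences
\[\Der(R)^{R/I-tors}\simeq \Comod_{\Gamma\1}(\Der(R))\simeq \Contra_{\Gamma\1}(\Der(R))\simeq \Der(R)^{R/I-comp},\]
where $\1 = R$. The next step is to identify the coalgebra $\Gamma\1$: by \cite[3.16]{barthel-heard-valenzuela_2018} the colocalization functor $\Gamma$ is the total right derived $I$-power torsion functor $\mathbb{R}T_I$, so $\Gamma\1 \simeq \mathbb{R}T_I(R)$, which matches the notation in the statement.

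Then I would bring in the identifications of the torsion and complete categories with derived categories of abelian categories. For the torsion side, \cite[3.7(2)]{barthel-heard-valenzuela_2020} gives $\Der(R)^{R/I-tors}\simeq \Der(\Mod_R^{I-tors})$, and for the complete side \cite[3.7(1)]{barthel-heard-valenzuela_2020} gives $\Der(R)^{R/I-comp}\simeq \Der(\Mod_R^{I-comp})$ (strictly, the right completion thereof, which I would either note is automatically complete here under the regular-sequence hypothesis, or simply reproduce the phrasing from the cited source). Stringing these together with the local-duality equivalences yields exactly the two chains of equivalences claimed. Since \cref{thm:local-duality-co-contra} produces symmetric monoidal equivalences and the comparisons with $\Der(\Mod_R^{I-tors})$ and $\Der(\Mod_R^{I-comp})$ are monoidal (the torsion and complete subcategories inherit their monoidal structures from $\Der(R)$), the resulting equivalences are symmetric monoidal.

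The main obstacle, such as it is, will be bookkeeping rather than mathematics: making sure the coalgebra appearing in \cref{thm:local-duality-co-contra} is literally $\mathbb{R}T_I(R)$ and not merely equivalent to it up to the identifications, and handling the minor discrepancy between $\Der(\Mod_R^{I-comp})$ and its right completion that appears in \cite[3.7(1)]{barthel-heard-valenzuela_2020}. I would address the latter by either citing that under the finite regular sequence hypothesis the relevant $t$-structure is already right complete, or by stating the proposition with the same caveat as the source. With those points dispatched, the proof is just: "combine \cref{thm:local-duality-co-contra}, the identification $\Gamma\1\simeq\mathbb{R}T_I(R)$ from \cite[3.16]{barthel-heard-valenzuela_2018}, and \cite[3.7]{barthel-heard-valenzuela_2020}."
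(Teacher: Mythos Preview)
Your proposal is correct and matches the paper's approach exactly: the paper does not give a separate proof block for this proposition, but the preceding paragraph assembles precisely the ingredients you list---the local duality context $(\Der(R), R/I)$, the identification $\Gamma \simeq \mathbb{R}T_I$ from \cite[3.16]{barthel-heard-valenzuela_2018}, the equivalences with $\Der(\Mod_R^{I-tors})$ and (the right completion of) $\Der(\Mod_R^{I-comp})$ from \cite[3.7]{barthel-heard-valenzuela_2020}, and then \cref{thm:Positselski-duality-idempotent}/\cref{thm:local-duality-co-contra}. You even flag the right-completion caveat more carefully than the paper itself does.
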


As for the chromatic example described above, we also get by \cref{cor:contra_unit_complete} a description of derived complete modules in terms of contramodules over the completed unit. 

\begin{corollary}
    \label{cor:contra_derived_complete}
    There is an equivalence of symmetric monoidal stable $\infty$-categories 
    \[\Der(R)^{R/I-\mathrm{comp}} \simeq \Contra_{\mathbb{L} C^I(R)}.\]
\end{corollary}

\begin{remark}
    Interestingly, there are also descriptions of the category $\Mod_R\Icomp$ as a category of contramodules. In particular, $\Mod_R\Icomp$ is equivalent to the category of (classical) contramodules over the pro-finite completed ring $C^I(R) = \lim_k R/I^k$, see \cite[Section 2.2]{positselski_2022_contramodules}. Furthermore, the category $\Der(R)^{R/I-\mathrm{comp}}$ is by \cite[3.7(1)]{barthel-heard-valenzuela_2020} equivalent to the right completion of the derived category of $\Mod_R\Icomp$. 

    Combining these facts with \cref{cor:contra_derived_complete} we then obtain an equivalence
    \[\Der(\Contra_{C^I(R)}) \simeq \Contra_{\mathbb{L} C^I(R)}(\Der(R)),\]
    which connects our $\infty$-categorical notion of contramodules to Positselski's classical theory via the total left derived functor. 


    
\end{remark}

\subsection*{Acknowledgements and personal remarks}

The contents of this paper go back to one of the first ideas I had at the beginning of my PhD. I had my two favorite mathematical hammers---local duality and the monoidal Barr--Beck theorem---and was trying to see if these were really one and the same tool. Local duality consists of three parts: local objects, torsion objects and complete objects. The core idea came from the fact that the local objects are modules over an idempotent algebra, and I thus wanted a similar description of the other two parts. Drew Heard's guidance led me to a dual monoidal Barr--Beck result, checking off the torsion part. I got the first hints of the last piece after an email correspondence with Marius Nielsen, where we discussed a local duality type statement for mapping spectra. The solution clicked into place during a research visit to Aarhus University. During my stay Sergey Arkhipov gave two talks on contramodules, for completely unrelated reasons, and I immediately knew this was the last piece of the puzzle. Greg Stevenson taught me some additional details, solidifying my ideas, which led me to conjecture one of the main results of the present paper during my talk in their seminar. The crowd nodded in approval, thus, being satisfied I knew the answer, I naturally spent almost two years not writing it up.

I want to thank all of the people mentioned above for their insights and pathfinding skills, without which this project would still have been a rather simple-minded idea in the optimistic brain of a young PhD student. 

After the first version of this paper was made public, I continued to try to understand how to make $\Lambda \1$ naturally show up in the Positselski duality. I am indebted to Florian Riedel for teaching me things about coalgebras, and for helping me solidify several related ideas. 

\printbibliography{}

\textbf{Torgeir Aamb\o:} Department of Mathematical Sciences, Norwegian University of Science and Technology, Trondheim
 
\end{document}